\newcommand{\R}{\ensuremath{\mathbb{R}}}
\newcommand{\Z}{\ensuremath{\mathbb{Z}}}
\newcommand{\Ss}{\ensuremath{\mathcal{S}}}
\newcommand{\BB}{\ensuremath{\mathcal{B}}}
\newcommand{\x}{\ensuremath{\mathbf{x}}}
\newcommand{\y}{\mathbf{y}}
\newcommand{\ep}{\ensuremath{\epsilon}}
\newcommand{\la}{\ensuremath{\lambda}}
\newcommand{\La}{\ensuremath{\Lambda}}
\newcommand{\te}{\ensuremath{\theta}}
\newcommand{\al}{\ensuremath{\alpha}}
\newcommand{\ga}{\ensuremath{\gamma}}
\newcommand{\de}{\ensuremath{\delta}}
\newcommand{\Om}{\ensuremath{\Omega}}
\newcommand{\lap}{\ensuremath{\Delta}}
\newcommand{\st}{\ensuremath{\; | \;}}
\newcommand{\closure}[1]{\overline{#1}}
\newcommand{\weaklyto}{\rightharpoonup}
\newcommand{\abs}[1]{|#1|}
\newcommand{\D}[2]{\ensuremath{\frac{\partial #1}{\partial #2}}}
\newcommand{\DD}[3]{\ensuremath{\frac{\partial^2 #1}{\partial #2 \partial #3}}}
\newcommand{\Dd}[2]{\ensuremath{\frac{\partial^2 #1}{\partial^2 #2}}}
\newcommand{\Pe}{P($\epsilon$)}
\newcommand{\Pee}{P($\epsilon'$)}
\DeclareMathOperator{\divg}{div}
\DeclareMathOperator{\dist}{dist}
\newtheorem{theorem}{Theorem}[section]
\newtheorem{corollary}[theorem]{Corollary}
\newtheorem{lemma}[theorem]{Lemma}
\theoremstyle{definition}
\theoremstyle{remark}
\newtheorem{remark}{Remark}[section]
\numberwithin{equation}{section}
\begin{document}

\title[A free-boundary problem for the evolution $p$-Laplacian]
{A free-boundary problem for the evolution $p$-Laplacian equation with a combustion boundary condition}

\author{Tung To}
\address{Department of Mathematics\\
University of Chicago\\
Chicago, IL 60637, USA}
\email{totung@math.uchicago.edu}

\subjclass[2000]{Primary: 35R35; Secondary: 35K55, 35K65}
\keywords{p-Laplacian, free-boundary problem, degenerate equation, combustion, regularization, convex domain}

\begin{abstract}
We study the existence, uniqueness and regularity of solutions of the equation $f_t = \lap_p f = \divg\,(|Df|^{p-2}\,Df)$ under over-determined boundary conditions $f = 0$ and $|Df| = 1$. We show that if the initial data is concave and Lipschitz with a bounded and convex support, then the problem admits a unique solution which exists until it vanishes identically. Furthermore, the free-boundary of the support of $f$ is smooth for all positive time.
\end{abstract}

\maketitle
%%%%%%%%%%%%%%%%%%%%%%%%% Introduction
\section{Introduction}\label{s-introduction}
%%%%%%%%%%%%%%%%%%%%%%%%%%%%%%%%%%%%%%%%%%%%%%%%%%
Fix a number $p > 2$. Given a non-negative function $f_0$ on $\R^n$ with positive set $\Om_0$, we want to find a non-negative function $f(x,t)$ on $\R^n \times (0,T)$ with positive set $\Om$ which solves the following problem:
\begin{equation}\label{P}\tag{P}
\begin{cases}
f_t = \lap_p f &\text{in $\Om = \{ f > 0 \}$}\\
f = 0 \text{ and } |Df| = 1 &\text{on $\partial\Om \cap \{ 0 < t < T \}$}\\
\lim_{t \to 0} f(\x,t) = f_0(\x) &\forall\, \x \in \R^n.
\end{cases}
\end{equation}

The operator
\[
\lap_p f = \divg\, ( |Df|^{p-2}\,Df )
\]
is known as the $p$-Laplacian. In non-divergent form, it can be written as
\begin{equation}
\lap_p = |Df|^{p-2} \lap f + (p-2)|Df|^{p-4} f_{ij} f_i f_j
\end{equation}
Note that the Einstein summation notation was used in the last term. It can also be written as
\begin{equation}
\lap_p = |Df|^{p-2} ( \lap f + (p-2)f_{\nu\nu} )
\end{equation}
where $f_{\nu\nu}$ denotes the second derivative of $f$ in the direction of $\nu = Df/|Df|$.

In the case $p > 2$, this operator is nonlinear and degenerate at vanishing points of $Df$. When $p = 2$, it is just the regular Laplacian.

Due to the over-determined boundary conditions $f = 0$ and $|Df| = 1$, the time-section of $\Om$
\[
\Om_t = \{ \x \in \R^n \st f(\x,t) > 0 \}
\]
will in general change with time. In other words, the boundary $\partial\Om_t$ moves. It is often known as the moving-boundary or free-boundary.

Our work is motivated by the work of Caffarelli and V\'azquez \cite{CaffarelliV1995} in which authors studied this problem in the case $p=2$. Their result stated essentially that if $\partial\Om_0 \in C^2$, $f_0 \in C^2(\Om_0)$ and $\lap f_0 \leq 0$, then there exists a solution to the problem. Moreover, if $\Om_0$ is compact, solutions vanish in finite time. Still in this case, long-time existence, uniqueness and regularity of the free-boundary have been studied by Daskalopoulos and Ki-Ahm Lee \cite{DaskalopoulosL2002} or Petrosyan \cite{Petrosyan2001, Petrosyan2002} when the initial value is concave or star-shaped with bounded support. Other kinds of solution have also been studied (see also \cite{LedermanVW2001}). In the case $p > 2$, an elliptic version of the problem has been studied before by Danielli, Petrosyan and Shahgholian \cite{DanielliPS2003} or Henrot and Shahgholian \cite{HenrotS2000a, HenrotS2000}. As far as the parabolic problem when $p > 2$ is concerned, the only result we are aware of is by Akopyan and Shahgholian \cite{AkopyanS2001} where authors showed the uniqueness under the hypotheses that the time-section $\Om_t$ is convex and non-decreasing in time. The questions of existence or regularity of the free-boundary were not addressed in that paper.

The main result of our work is stated below.

\begin{theorem}
Assume that $\Om_0$ is a bounded and convex domain. The function $f_0$ is positive and concave in $\Om_0$. Furthermore, on the boundary $\partial\Om_0$, $f_0$  satisfies
\begin{align*}
f_0(\x) &= 0 \quad\text{for all $\x$}\\
|Df_0(\x)| &= 1 \quad\text{for a.e. $\x$}.
\end{align*}
Then the problem \eqref{P} has a unique solution up to a finite time $T$ where it vanishes identically in the sense that
\[
\lim_{t \to T} f(\x,t) = 0 \quad \forall\; \x \in \R^n.
\]
Moreover, the free-boundary $\partial\Om_t$ is smooth for all $t \in (0,T)$.
\end{theorem}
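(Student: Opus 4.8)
The plan is to follow the singular-perturbation (``combustion'') method of Caffarelli and V\'azquez, adapted to the degenerate operator $\lap_p$ and built around the concavity hypothesis. Fix a smooth $\be\ge0$ supported in $[0,1]$ with $\int_0^1\be=\tfrac{p-1}{p}$ and set $\be_\eps(s)=\tfrac{1}{\eps}\be(s/\eps)$; this normalisation is dictated by a one-dimensional travelling-wave analysis of $f_t=\lap_p f-\be_\eps(f)$, which yields exactly $\abs{Df}=1$ across the reaction layer as $\eps\to0$. On a fixed ball $B\supset\closure{\Om_0}$ I would solve the penalised problem $f^\eps_t=\lap_p f^\eps-\be_\eps(f^\eps)$ in $B\times(0,\infty)$, with $f^\eps=0$ on $\partial B$ and $f^\eps(\cdot,0)=f_0$ (after a routine smooth approximation of $f_0$ by concave $1$-Lipschitz functions, undone at the end), using the standard theory for degenerate quasilinear parabolic equations. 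Since $\be_\eps$ vanishes off $(0,\eps)$, the constants $0$ and $\|f_0\|_\infty$ are a sub- and a supersolution, so $0\le f^\eps\le\|f_0\|_\infty$.

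The heart of the argument is a family of estimates uniform in $\eps$. \emph{Concavity:} $f^\eps(\cdot,t)$ should remain concave on its positivity set, proved by a concavity maximum principle applied to $(\x,\y,t)\mapsto 2f^\eps(\tfrac{\x+\y}{2},t)-f^\eps(\x,t)-f^\eps(\y,t)$, the delicate point being that the absorption $-\be_\eps(f^\eps)$ respects the structural inequality required on the diagonal; hence each positivity set $\Om_t^\eps$ is convex. \emph{Monotonicity:} since $\lap_p g\le0$ for concave $g$ and $-\be_\eps\le0$, this forces $f^\eps_t\le0$, so $\Om_t^\eps$ is nonincreasing and contained in $\Om_0$. \emph{Gradient bound:} a concave function vanishing on the boundary of its convex positivity set attains $\sup\abs{Df^\eps}$ on that boundary; where this boundary still lies in $\partial\Om_0$, the comparison $f^\eps(\cdot,t)\le f_0$ forces the inward normal derivative to be $\le\abs{Df_0}=1$, and where it has receded, the travelling-wave profile caps the slope at $1+o_\eps(1)$, so $\abs{Df^\eps}\le1+o_\eps(1)$ throughout. \emph{Time-derivative bound:} a B\'enilan--Crandall argument using the $(p-1)$-homogeneity of $\lap_p$ and the favourable sign of $-\be_\eps$ gives $f^\eps_t\ge-\tfrac{f^\eps}{(p-2)t}$, so $f^\eps_t$ is bounded on $\{t\ge t_0\}$. \emph{Non-degeneracy:} since $\abs{Df^\eps}\to1$ at the edge of the reaction layer, a barrier comparison gives $\sup_{B_r(\x_0)}f^\eps\ge(1-o_\eps(1))r$ for $\x_0$ on $\{f^\eps=\eps\}$, matching the gradient bound.

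Along a subsequence $f^{\eps_k}\to f$ locally uniformly with $Df^{\eps_k}\to Df$ a.e.; $f$ is concave with convex, nonincreasing time-sections $\Om_t\subset\closure{\Om_0}$, it solves $f_t=\lap_p f$ on $\{f>0\}$ because $\be_{\eps_k}(f^{\eps_k})\to0$ there, and $\abs{Df}=1$ holds on $\partial\Om_t\cap\{0<t<T\}$ in the viscosity/measure-theoretic sense by the gradient and non-degeneracy bounds together with the travelling-wave normalisation; thus $f$ solves \eqref{P}. Finite extinction follows from $\tfrac{d}{dt}\int_{\Om_t}f=\int_{\Om_t}\lap_p f=-\int_{\partial\Om_t}\abs{Df}^{p-1}=-\mathcal{H}^{n-1}(\partial\Om_t)$ (using $f=0$ and $\abs{Df}=1$ on $\partial\Om_t$) combined with the isoperimetric inequality and $\int_{\Om_t}f\le C\abs{\Om_t}$, giving $\tfrac{d}{dt}\int_{\Om_t}f\le-c\,(\int_{\Om_t}f)^{(n-1)/n}$ and hence a finite $T$ at which $\int_{\Om_t}f$, and then $f$ itself, vanishes. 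For uniqueness I would establish a comparison principle for \eqref{P} within the class of solutions with concave profile and convex time-sections — monotone dependence on the initial datum, obtained by comparing the associated penalised problems and letting $\eps\to0$, in the spirit of Akopyan--Shahgholian — which also shows the limit is independent of the subsequence.

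Finally, the regularity of $\partial\Om_t$ for $t\in(0,T)$. Near any point of $\partial\Om_{t_0}$ one has $f\asymp\dist(\cdot,\partial\Om_t)$, $f_t$ bounded, and, decisively, $\abs{Df}$ bounded away from $0$ and $\infty$, so there $\lap_p$ is a \emph{uniformly} parabolic operator — its only degeneracy, at $Df=0$, sits in the interior, away from the free boundary. First a geometric step: $\partial\Om_t$ is Lipschitz as the boundary of a convex set, and Lipschitz in $(\x,t)$ jointly because the normal velocity $-f_t/\abs{Df}$ is bounded; blowing up at a putative corner produces a degree-one homogeneous global solution in a wedge with $\abs{Df}=1$ on both faces, and a Liouville-type classification of such solutions (the $p$-Laplacian analogue of the known $p=2$ statement) forces the wedge to be a half-space, ruling out corners and yielding $C^{1,\al}$. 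Then a partial hodograph--Legendre transform — writing one space coordinate as a function of the other space variables, the value of $f$, and $t$ — turns the free-boundary problem into a uniformly parabolic quasilinear equation with smooth coefficients, complemented by the smooth oblique boundary condition encoded in $\abs{Df}^2=1$; Schauder estimates and bootstrapping then upgrade the transformed solution, hence $\partial\Om_t$, to $C^\infty$. I expect the two genuinely hard points to be the propagation of concavity through the degenerate penalised equation in the presence of the singular absorption, on which everything downstream rests, and, within the last step, the Liouville classification excluding corners together with the legitimacy of the hodograph bootstrap for $\lap_p$, which is available precisely because $\abs{Df}=1\ne0$ on the free boundary.
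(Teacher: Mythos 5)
Your scheme is the Caffarelli--V\'azquez singular perturbation $f^\eps_t=\lap_p f^\eps-\be_\eps(f^\eps)$ on a fixed ball, which is precisely the route the paper declines to take: there the \emph{operator} is regularized, to $\divg((|Df|^2+\ep)^{q-1}Df)$, so that every approximate problem is still a genuine free-boundary problem carrying the conditions $f=0$ and $\abs{Df}=1$ exactly on $\partial\Om_t$. The decisive gap in your version is the concavity step, which you correctly identify as the point on which everything rests but then only assert. At an interior touching point of Korevaar's concavity function $2f^\eps(\tfrac{\x+\y}{2},t)-f^\eps(\x,t)-f^\eps(\y,t)$ the reaction contributes $-2\be_\eps\bigl(f^\eps(\tfrac{\x+\y}{2})\bigr)+\be_\eps(f^\eps(\x))+\be_\eps(f^\eps(\y))$, and the sign you need amounts to midpoint convexity of $\be_\eps$ along the values of $f^\eps$; but $\be_\eps$ is a bump of height $\eps^{-1}$ supported in $(0,\eps)$, so in the configuration where the endpoint values lie outside the reaction zone and the midpoint value lies inside it this term has the wrong sign and size of order $\eps^{-1}$. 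This is exactly why the $p=2$ literature does not prove concavity of the penalized solutions. The paper's convexity proof (Section \ref{s-convexity}) instead bounds $f_{\la\la}$ for the regularized \emph{free-boundary} problem and, at a putative boundary maximum, differentiates the exact identity $\abs{Df}^2=1$ twice tangentially to force $f_{\nu\la\la}=0$ --- an identity that has no analogue across a diffuse $\eps$-layer. Without propagated concavity, your monotonicity of $\Om^\eps_t$, the gradient bound, the non-degeneracy, and the comparison class for uniqueness all lose their foundation.

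Two further points. The corner-exclusion step invokes a Liouville classification of degree-one homogeneous global solutions of the $p$-Laplacian wedge problem; for $p>2$ this is not an off-the-shelf result, and the paper never needs it: concavity plus $\abs{Df^\ep}\le1$ give the lower bound $f_1\ge m/2R$ near the free boundary directly (Section \ref{s-regularity}), so the hodograph map is admissible and uniformly parabolic at once, with uniform-in-$\ep$ Schauder estimates and no blow-up analysis. Also, your uniqueness is only within the class of concave solutions with convex sections, whereas the theorem asserts uniqueness among all solutions in the sense of Section \ref{s-statement}; the paper obtains this by scaling an arbitrary competitor, $g^\la(\x,t)=\la^{-1}g(\la^2\x,\la^{p+2}t)$, and applying DiBenedetto's comparison lemma on the cylinder where the supports are nested. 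On the credit side, your normalization $\int_0^1\be=\tfrac{p-1}{p}$ is the correct $p$-analogue of the travelling-wave condition, and your extinction argument via $\tfrac{d}{dt}\int_{\Om_t}f=-\mathcal{H}^{n-1}(\partial\Om_t)$ together with the isoperimetric inequality is sound and arguably cleaner than the paper's subsolution argument in Section \ref{s-finite}.
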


It is well-known that solutions of the evolution $p$-Laplacian are only $C^{1,\al}$ at points of vanishing gradient (see for example \cite{DiBenedetto1993}). Hence, solutions to the problem \eqref{P} must be defined in some weak sense. We will state precisely the meaning of our solution in section \ref{s-statement}.

Our approach to the problem is totally different from \cite{CaffarelliV1995}. To deal with the degeneracy, we will approximate the $p$-Laplacian with the following regularized operator
\begin{equation*}\tag{\Pe}\label{Pe}
\lap_p^\ep f = \divg ( (|Df|^2 + \ep)^{q - 1}\,Df ).
\end{equation*}
Here and throughout this work, we define $q = p/2$. We will establish some properties for solutions of these regularized problems and then let $\ep$ go to 0 to obtain a solution to the degenerate problem.

In order to solve this regularized free-boundary problem, we employ a change of coordinates that transforms it into a quasilinear equation with Neumann boundary condition on a fixed-domain problem. Applying results from standard theory of quasi-linear parabolic equations with oblique boundary condition, we show that this new problem admits a solution for some positive time. Revert back to the original coordinates, we obtain a short-time existence result for the regularized problem. This argument is carried out in section \ref{s-short-time}.

In section \ref{s-gradient}, we prove a simple estimate for the gradient $|Df|$ of solutions of the problem \eqref{Pe}. In section \ref{s-convexity}, we prove a crucial result that the time-section $\Om_t$ remains convex and the function $f(.,t)$ remains concave on $\Om_t$ for all time $t$. Convexity of $\Om_t$ guarantees that the free-boundary $\partial\Om_t$ does not touch itself and also enables us to prove the non-degeneracy of $|Df|$ near the free-boundary.

In section \ref{s-regularity}, we obtain an estimate for higher derivatives of $f$ in a neighborhood the free-boundary $\partial\Om_t$, uniformly in time $t$ and especially, in $\epsilon$, using the non-degeneracy of $|Df|$. This fact and the convexity guarantee that singular cannot develop on the free-boundary. The uniqueness for this regularized problem is obtained in section \ref{s-comparison}. In section \ref{s-long-time}, we then obtain a long-time existence result for solution of the regularized problem. Passing $\ep$ to 0, we then obtain a solution to the degenerate problem in section \ref{s-existence}. The uniqueness for the degenerate problem is then shown in section \ref{s-uniqueness}. In the last section, we show that solution to our degenerate problem vanishes in finite time.

\textbf{Acknowledgement.} I express my gratitude to my thesis advisor, P. Daskalopoulos, for suggesting this problem, and for her invaluable advices and support during the completion of this work.
%%%%%%%%%%%%%%%%%%%%%%%%%%%Statement
\section{Definition of Solution}\label{s-statement}
%%%%%%%%%%%%%%%%%%%%%%%%%%%%%%%%%%%%%%%%%%%%%%%%%%%%%%%%
In this section, we will define precisely what we mean by solution of the problem (P). We start by introducing some notations. For any $0 < t_1 < t_2 < T$, define
\begin{align*}
\Om_{(t_1, t_2)} &= \Om \cap \{ t_1 < t < t_2 \}.
\end{align*}
First, we require that the free-boundary $\partial\Om_t$ is in $C^1$ and the function $f$ is in
\[
C(0,T; C^1(\closure{\Om_t})).
\]
The equation
\[
f_t = \lap_p f \quad\text{in $\Om$}
\]
is then defined in the sense that for any test function $\te$ in $C^\infty_0 (\Om)$ and for any $0 < t_1 < t_2 < T$,
\[
\int_{\Om_{(t_1, t_2)}} f \te_t\,d\x dt - \left.\int f\te\,d\x \right|^{\Om_{t_2}}_{\Om_{t_1}} = \int_{\Om_{(t_1,t_2)}} |Df|^{p-2}\,Df \cdot D\te\,d\x dt.
\]
The Cauchy-Dirichlet conditions $f = 0$ on $\partial\Om_t$ and $f(.,0) = f_0$ are understood in the pointwise sense
\begin{align*}
f(\x,t) \to 0 &\quad\text{as $\x \to \x_0 \in \partial\Om_t$},\\
%\intertext{and}
f(\x,t) \to f_0(\x) &\quad\text{as $t \to 0$}.
\end{align*}

Finally, the Neumann's boundary condition $|Du| = 1$ is defined in the following classical sense
\[
f_\nu(\x_0,t) = \lim_{h \to 0^{+}} \frac{f(\x_0 + h \nu)}{h} = 1.
\]
where $\x_0$ is a point on the free-boundary $\partial\Om_t$ and $\nu$ is the spatial inward unit normal vector at $\x_0$ with regards to $\partial\Om_t$.

%%%%%%%%%%%%%%%%%%%%%%%%%%%  Short-time Existence
\section{Short-time Existence for Regularized Problem}\label{s-short-time}
%%%%%%%%%%%%%%%%%%%%%%%%%%%%%%%%%%%%%%%%%%%%%%%%%%%%%%%
In this section, we will prove that the regularized free-boundary problem admits a solution for some positive time. We do it by a change of coordinates technique that transforms the problem into a fixed-domain problem. This technique has been used by other authors for different problems before (see for example \cite{DaskalopoulosL2002}, \cite{DaskalopoulosH1998}). Note that concavity is not needed in this result.

\begin{lemma}\label{short-time}
Assume that $\Om_0$ is  $C^\infty$. The function $f_0$ is in $C^\infty(\closure{\Om_0})$ and positive in $\Om_0$. Furthermore, on the boundary $\partial\Om_0$, $f_0$ satisfies
\[
f_0 = 0 \quad\text{and}\quad |Df_0| = 1.
\]
Then there exists a smooth solution to the regularized problem \eqref{Pe} for some $T > 0$.
\end{lemma}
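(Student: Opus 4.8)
The plan is to fix the free boundary by means of a diffeomorphism built from the distance function to $\partial\Om_0$, thereby converting the over-determined problem \eqref{Pe} on the moving domain $\Om_t$ into a quasilinear parabolic equation on the fixed cylinder $\Om_0 \times (0,T)$ with an oblique (Neumann-type) boundary condition, to which the classical theory of Ladyzhenskaya--Solonnikov--Ural'tseva applies. Concretely, since $\partial\Om_0 \in C^\infty$, the signed distance $d(\x) = \dist(\x, \partial\Om_0)$ is smooth in a collar neighborhood $N = \{ 0 \le d < \de_0 \}$ of $\partial\Om_0$, and $|Dd| \equiv 1$ there. I would represent the unknown free boundary as a graph over $\partial\Om_0$ in the normal direction: write points of $\Om_t$ near the boundary as $\x = \y - \rho(\y,t)\, Dd(\y)$ for $\y \in \partial\Om_0$, where $\rho(\cdot,t)$ is an unknown scalar function with $\rho(\cdot,0) = 0$. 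Extending this to a global change of variables $\Phi_t : \Om_0 \to \Om_t$ that is the identity outside the collar, the new unknown $u(\y,t) = f(\Phi_t(\y),t)$ solves, on the fixed domain $\Om_0$, a fully nonlinear--looking but genuinely quasilinear equation $u_t = \mathcal{F}[u,\rho,Du,D^2u]$ whose principal part is a small perturbation of $\lap_p^\ep$ (uniformly parabolic because of the $\ep$-regularization, with ellipticity constants depending on $\ep$ and on $\|Df_0\|$), coupled to an ODE-type relation determining $\partial_t\rho$ from the Neumann data.

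The key device for closing the system is to use the two boundary conditions to eliminate $\rho$. On $\partial\Om_0$ one has $u = 0$ (the Dirichlet condition $f=0$ on $\partial\Om_t$ transported back), and the condition $|Df| = 1$ on $\partial\Om_t$ becomes, after the change of variables, an expression of the form $u_\nu = 1 + (\text{lower order in }\rho, Du)$ along $\partial\Om_0$; differentiating $u(\y,t)=0$ in $t$ along the boundary and combining with the equation gives $\partial_t \rho = G(u, Du, \rho)$ on $\partial\Om_0$, an evolution law for the boundary in terms of the solution's normal derivative there. Substituting this back, the transported Dirichlet condition $u = 0$ on $\partial\Om_0$ together with the Neumann relation yields, after one more manipulation (using the equation to express $u_t$ restricted to $\partial\Om_0$), an \emph{oblique derivative boundary condition} $\partial_\nu u + b(\y,t,u,D_{\tan}u) = 1$ purely for $u$, with $\rho$ recovered a posteriori by integrating its evolution law. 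Thus $u$ satisfies a quasilinear uniformly parabolic equation on $\Om_0 \times (0,T)$ with a nonlinear oblique boundary condition and smooth compatible initial data $f_0$ (the zeroth and first order compatibility conditions at $t=0$ hold precisely because $f_0 = 0$ and $|Df_0| = 1$ on $\partial\Om_0$, and higher-order ones can be arranged or are automatic by the smoothness of $f_0$). By Theorem~7.4 (and the attendant Schauder estimates) in Chapter~V of Ladyzhenskaya--Solonnikov--Ural'tseva, or Lunardi's analytic-semigroup framework, this problem has a unique solution in $C^{2+\al,1+\al/2}(\closure{\Om_0}\times[0,\tau])$ for some $\tau > 0$; bootstrapping gives $u \in C^\infty$, hence $\rho \in C^\infty$, hence a smooth free boundary $\partial\Om_t$ and a smooth $f$ on $\closure\Om_t$ solving \eqref{Pe} for $t \in (0,\tau)$.

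I expect the main obstacle to be the bookkeeping around the boundary conditions: showing that the two over-determined conditions on the moving boundary combine, under the chosen parametrization, into a single \emph{regular oblique} condition for $u$ — i.e.\ verifying that the coefficient of $\partial_\nu u$ in the resulting boundary operator does not vanish near $t=0$ (it equals $1$ at $t=0$ by $|Df_0|=1$, so this is a smallness/continuity argument), and checking the compatibility conditions to the order required by the parabolic Schauder theory. A secondary technical point is that the change of variables $\Phi_t$ must be constructed so that it depends smoothly on $\rho$ and reduces to the identity away from the collar, so that the transported operator is a controlled perturbation of $\lap_p^\ep$ with no artificial degeneracy introduced; this is routine once $\de_0$ is fixed small enough that $d$ is smooth on $N$. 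Note that, as remarked, concavity of $f_0$ plays no role here — it is only the regularity and the two boundary conditions that are used — so the lemma holds in the stated generality.
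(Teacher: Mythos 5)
Your overall strategy --- fix the domain by a change of variables and invoke quasilinear parabolic theory with an oblique boundary condition --- is the same as the paper's, but the specific transformation you choose is where the whole difficulty of the lemma lives, and your proposed reduction does not go through as written. With the Hanzawa-type parametrization $\x = \y - \rho(\y,t)\,Dd(\y)$ you genuinely have \emph{two} unknowns, $u$ and $\rho$, and your elimination of $\rho$ is flawed in two places. First, differentiating the transported Dirichlet condition $u=0$ in $t$ along $\partial\Om_0$ and substituting the PDE gives $\partial_t\rho$ in terms of $\lap_p^\ep f$ evaluated at the boundary, i.e.\ in terms of \emph{second} derivatives of $u$, not $G(u,Du,\rho)$ as you wrote; a boundary evolution law of second order in $u$ is not something the standard oblique-derivative theory absorbs. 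Second, the transported Neumann condition $|Df|=1$ involves not just $\rho$ but its tangential derivatives $D_{\tan}\rho$ (they enter through $D\Phi_t^{-1}$), so $\rho$ cannot be ``recovered a posteriori'': it sits inside the boundary operator for $u$, and the claimed condition $\partial_\nu u + b(\y,t,u,D_{\tan}u)=1$ ``purely for $u$'' is not established. What remains is a genuinely coupled system (parabolic equation for $u$, Dirichlet condition $u=0$, a $\rho$-dependent Neumann relation, and a second-order kinetic law for $\rho$) that would need a separate fixed-point or maximal-regularity argument you have not supplied.

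The paper sidesteps this by transforming the \emph{graph} of $f$ rather than the domain: it writes the surface $z=f(x,y,t)$ as the image of $w=g(u,v,t)$ under $\Phi(u,v,w) = (u,v,f_0(u,v)) + w\,T(u,v)$, where $T$ is transversal to the initial graph and is chosen \emph{parallel to the plane $z=0$} near $\partial\Om_0$. That last choice is the key device: since $f_0=0$ on $\partial\Om_0$, the fibers over $\partial\Om_0$ stay in $\{z=0\}$, so the single scalar unknown $g$ encodes the height of the graph in the interior \emph{and} the horizontal displacement of the free boundary at the edge, the Dirichlet condition is built into the parametrization, and the domain of $g$ is automatically the fixed set $\Om_0$. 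Only $|Df|=1$ survives as a boundary condition, and it becomes a regular oblique condition $C(u,v,g,Dg)=0$ for the single quasilinear equation $g_t = A^{ij}g_{ij}+B$, to which Lieberman's Chapter~14 theory applies directly. If you want to salvage your route, you would need to either prove well-posedness of the coupled $(u,\rho)$ system by hand, or modify your change of variables so that the boundary position is slaved to $u$ itself rather than carried as an independent unknown --- which is exactly what the paper's transversal vector field accomplishes. (Your closing remark that concavity is not needed here is correct and agrees with the paper.)
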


\begin{proof}
The argument in this proof works for any dimension, but due to the complexity of some computation involved, we will present the proof for the case $n=2$ only.

A word on notation used in this proof : we use bold-face letters $\x, \mathbf{y},...$ to denote points in Euclidean spaces while normal letters $x, y, z,...$ for real numbers, scalars or components of points in Euclidean spaces.

Denote by $\Ss$ the smooth surface $z = f_0(x,y)$, $(x,y) \in \closure{\Om_0}$. Let $T = (T_1, T_2, T_3)$ be a smooth vector field on $\closure{\Om_0}$ such that $T(x,y)$ is not a tangential vector to the surface $\Ss$ at the point $f_0(x,y)$. Since $|Df_0| = 1$ on the boundary $\partial\Om_0$, we can also choose $T$ to be parallel to the plane $z = 0$ in a small neighborhood of $\partial\Om_0$.

It is known that for some positive, small enough $\eta$, we can define a change of spatial coordinates
\[
\Phi : \Om_0 \times [-\eta, \eta] \to \R^3
\]
by the formula
\[
\left( \begin{matrix}
x\\
y\\
x
\end{matrix} \right)
=
\Phi
\left( \begin{matrix}
u\\
v\\
w
\end{matrix} \right)
=
f_0
\left( \begin{matrix}
u\\
v
\end{matrix} \right)
+
w\, T
\left( \begin{matrix}
u\\
v
\end{matrix} \right).
\]
The map $\Phi$ defines $x,y$ and $z$ as smooth functions of $u,v$ and $w$ with smooth inverses.

The graph of $(x,y,f(x,y,t))$, $(x,y) \in \Om_t$ is then transformed to $(u,v,g(u,v,t))$, $(u,v) \in \Om_0$ via this coordinates change for some uniquely-defined $g$ if the surface $z = f(x,y,t)$ is sufficiently close to $\Ss$ ($(x,y,f(x,y,t)) \in \Phi(\Om_0 \times [-\eta, \eta])$ for all $(x,y) \in \Om_t$). When $f$ evolves as a function of $(x,y)$, $g$ evolves as a function of $(u,v)$. Importantly, the domain of $g$ is fixed as $\Om_0$ due to our requirement that $T$ is parallel to the plane $z=0$ on $\partial\Om_0$.

We will compute the evolution equation and the boundary condition of $g$. Denote by $x_u$, $x_v$, $x_w$, $y_u$, $y_v$, $y_w$, $z_u$, $z_v$ and $z_w$ the partial derivatives of the functions $x(u,v,w)$, $y(u,v,w)$ and $z(u,v,w)$. Similarly we denote partial second derivatives of $x, y$ and $z$ by $x_{uu}, x_{uv},...$.

We begin with first derivatives. Since $x,y$ and $z$ are functions of $u,v$ and $w$, while $w = g(u,v,t)$ is a function of $u,v$ and $t$, we have
\begin{align*}
\left( \begin{matrix}
\D{x}{u}& \D{y}{u}\\
\\
\D{x}{v}& \D{y}{v}
\end{matrix} \right)
&=
\left( \begin{matrix}
x_u + x_w \D{w}{u}& y_u + y_w \D{w}{u}\\
\\
x_v + x_w \D{w}{v}& y_v + y_w \D{w}{v}
\end{matrix} \right)\\
&=
\left( \begin{matrix}
x_u + x_w g_u& y_u + y_w g_u\\
\\
x_v + x_w g_v& y_v + y_w g_v
\end{matrix} \right).
\end{align*}
We can compute the partial derivatives of $u(x,y,t)$ and $v(x,y,t)$ by
\begin{align}\label{Ux}
\left( \begin{matrix}
\D{u}{x}& \D{u}{y}\\
\\
\D{v}{x}& \D{v}{y}
\end{matrix} \right)
=
\left( \begin{matrix}
\D{x}{u}& \D{x}{v}\\
\\
\D{y}{u}& \D{y}{v}
\end{matrix} \right)^{-1}
&=
\frac{1}{D}
\left( \begin{matrix}
\D{y}{v}& -\D{x}{v}\\
\\
-\D{y}{u}& \D{x}{u}
\end{matrix} \right)\\
&=
\frac{1}{D}
\left( \begin{matrix}
y_v + y_w g_v& - x_v - x_w g_v\\
\\
- y_u - y_w g_u& x_u + x_w g_u
\end{matrix} \right)
\end{align}
where
\begin{equation*}
D = \D{x}{u}\D{y}{v} - \D{x}{v}\D{y}{u} = (x_u y_v - x_v y_u) + (x_w y_v - y_w x_v)g_u + (y_w x_u - x_w y_u)g_v.
\end{equation*}
We then have
\begin{equation}\label{z-first}
\left( \begin{matrix}
f_x\\
\\
f_y
\end{matrix} \right)
=
\left( \begin{matrix}
\D{z}{x}\\
\\
\D{z}{y}
\end{matrix} \right)
=
\left( \begin{matrix}
\D{u}{x}& \D{v}{x}\\
\\
\D{u}{y}& \D{v}{y}
\end{matrix} \right)
\left( \begin{matrix}
\D{z}{u}\\
\\
\D{z}{v}
\end{matrix} \right)
=
\frac{1}{D}
\left( \begin{matrix}
\D{y}{v}& -\D{y}{u}\\
\\
-\D{x}{v}& \D{x}{u}
\end{matrix} \right)
\left( \begin{matrix}
z_u + z_w g_u\\
\\
z_v + z_w g_v
\end{matrix} \right).
\end{equation}
Next we compute the second-order derivatives. First, we have partial second order derivatives of $x$ with regards to $u$ and $v$.
\begin{align*}
\Dd{x}{u} &= x_{uu} + 2 x_{uw}\D{w}{u} + x_{ww} \left( \D{w}{u} \right)^2 + x_w \Dd{w}{u}\\
&= x_{uu} + 2 x_{wu} g_u + x_w g_{uu}\\
\Dd{x}{v} &= x_{vv} + 2 x_{wv} g_v + x_w g_{vv}\\
\DD{x}{u}{v} &= x_{uv} + x_{wu} g_v + x_{wv} g_u + x_w g_{uv}
\end{align*}
and similar formulae for $y$ and $z$.

Differentiate \eqref{z-first} we have
\begin{equation}\label{Zxx}
f_{xx} = \Dd{z}{x} = \D{z}{u}\,\Dd{u}{x} + \D{z}{v}\,\Dd{v}{x} + \Dd{z}{u}\left( \D{u}{x} \right)^2 + 2\DD{z}{u}{v}\,\D{u}{x}\,\D{v}{x}
 + \Dd{z}{v}\left( \D{v}{x} \right)^2.
\end{equation}

We need to compute second order derivatives of $u$ and $v$ with regards to $x$ and $y$. The formula \eqref{Zxx} is true if we substitute any function of $u$ and $v$ in place of $z$. Because second order derivatives of $x$ and $y$ with regards to $x$ are zero
\begin{align*}
0 &= \D{x}{u}\,\Dd{u}{x} + \D{x}{v}\,\Dd{v}{x} + \Dd{x}{u}\left( \D{u}{x} \right)^2 + 2\DD{x}{u}{v}\,\D{u}{x}\,\D{v}{x}
 + \Dd{x}{v}\left( \D{v}{x} \right)^2\\
0 &= \D{y}{u}\,\Dd{u}{x} + \D{y}{v}\,\Dd{v}{x} + \Dd{y}{u}\left( \D{u}{x} \right)^2 + 2\DD{y}{u}{v}\,\D{u}{x}\,\D{v}{x}
 + \Dd{y}{v}\left( \D{v}{x} \right)^2.
 \end{align*}
In other words
\begin{equation*}
\left( \begin{matrix}
\D{x}{u}& \D{x}{v}\\
\\
\D{y}{u}& \D{y}{v}
\end{matrix} \right)
\left( \begin{matrix}
\Dd{u}{x}\\
\\
\Dd{v}{x}
\end{matrix} \right)
+
\left( \begin{matrix}
\Dd{x}{u}\left( \D{u}{x} \right)^2 + 2\DD{x}{u}{v}\,\D{u}{x}\,\D{v}{x} + \Dd{x}{v}\left( \D{v}{x} \right)^2\\
\\
\Dd{y}{u}\left( \D{u}{x} \right)^2 + 2\DD{y}{u}{v}\,\D{u}{x}\,\D{v}{x} + \Dd{y}{v}\left( \D{v}{x} \right)^2
\end{matrix} \right)
= 0
\end{equation*}
or
\begin{align*}
\left( \begin{matrix}
\Dd{u}{x}\\
\\
\Dd{v}{x}
\end{matrix} \right)
&=
-
\left( \begin{matrix}
\D{x}{u}& \D{x}{v}\\
\\
\D{y}{u}& \D{y}{v}
\end{matrix} \right)^{-1}
\left( \begin{matrix}
\Dd{x}{u}\left( \D{u}{x} \right)^2 + 2\DD{x}{u}{v}\,\D{u}{x}\,\D{v}{x} + \Dd{x}{v}\left( \D{v}{x} \right)^2\\
\\
\Dd{y}{u}\left( \D{u}{x} \right)^2 + 2\DD{y}{u}{v}\,\D{u}{x}\,\D{v}{x} + \Dd{y}{v}\left( \D{v}{x} \right)^2
\end{matrix} \right)
\\
&=
-
\left( \begin{matrix}
\D{u}{x}& \D{u}{y}\\
\\
\D{v}{x}& \D{v}{y}
\end{matrix} \right)
\left( \begin{matrix}
\Dd{x}{u}\left( \D{u}{x} \right)^2 + 2\DD{x}{u}{v}\,\D{u}{x}\,\D{v}{x} + \Dd{x}{v}\left( \D{v}{x} \right)^2\\
\\
\Dd{y}{u}\left( \D{u}{x} \right)^2 + 2\DD{y}{u}{v}\,\D{u}{x}\,\D{v}{x} + \Dd{y}{v}\left( \D{v}{x} \right)^2
\end{matrix} \right).
\end{align*}

We then have
\begin{align*}
&\D{z}{u}\,\Dd{u}{x} + \D{z}{v}\,\Dd{v}{x} =
\left( \begin{matrix}
\D{z}{u} &\D{z}{v}
\end{matrix} \right)
\left( \begin{matrix}
\Dd{u}{x}\\
\\
\Dd{v}{x}
\end{matrix} \right)\\
&=
-
\left( \begin{matrix}
\D{z}{u} &\D{z}{v}
\end{matrix} \right)
\left( \begin{matrix}
\D{u}{x}& \D{u}{y}\\
\\
\D{v}{x}& \D{v}{y}
\end{matrix} \right)
\left( \begin{matrix}
\Dd{x}{u}\left( \D{u}{x} \right)^2 + 2\DD{x}{u}{v}\,\D{u}{x}\,\D{v}{x} + \Dd{x}{v}\left( \D{v}{x} \right)^2\\
\\
\Dd{y}{u}\left( \D{u}{x} \right)^2 + 2\DD{y}{u}{v}\,\D{u}{x}\,\D{v}{x} + \Dd{y}{v}\left( \D{v}{x} \right)^2
\end{matrix} \right)\\
&=
-
\left( \begin{matrix}
f_x & f_y
\end{matrix} \right)
\left( \begin{matrix}
\Dd{x}{u}\left( \D{u}{x} \right)^2 + 2\DD{x}{u}{v}\,\D{u}{x}\,\D{v}{x} + \Dd{x}{v}\left( \D{v}{x} \right)^2\\
\\
\Dd{y}{u}\left( \D{u}{x} \right)^2 + 2\DD{y}{u}{v}\,\D{u}{x}\,\D{v}{x} + \Dd{y}{v}\left( \D{v}{x} \right)^2
\end{matrix} \right).
\end{align*}
Substitute into \eqref{Zxx}
\begin{align*}
f_{xx} &= \left( \Dd{z}{u} - f_x \Dd{x}{u} - f_y \Dd{y}{u} \right) \left( \D{u}{x} \right)^2
+ \left( \Dd{z}{v} - f_x \Dd{x}{v} - f_y \Dd{y}{v} \right) \left( \D{v}{x} \right)^2\\
&\qquad {} + 2 \left( \DD{z}{u}{v} - f_x \DD{x}{u}{v} - f_y \DD{y}{u}{v} \right) \D{u}{x} \D{v}{x}.
\end{align*}
Let
\begin{align*}
E &= z_w - f_x x_w - f_y y_w\\
A &= \Dd{z}{u} - f_x \Dd{x}{u} - f_y \Dd{y}{u}\\
&= E g_{uu} + 2(z_{wu} - f_x x_{wu} - f_y y_{wu}) g_u + (z_{uu} - f_x x_{uu} f_y y_{uu})\\
B &= \Dd{z}{v} - f_x \Dd{x}{v} - f_y \Dd{y}{v}\\
&= E g_{vv} + 2(z_{wv} - f_x x_{wv} - f_y y_{wv}) g_v + (z_{vv} - f_x x_{vv} f_y y_{vv})\\
%\intertext{}\\
C &= \DD{z}{u}{v} - f_x \DD{x}{u}{v} - f_y \DD{y}{u}{v}\\
&= E g_{uv} + (z_{wv} - f_x x_{wv} - f_y y_{wv})g_u + (z_{wu} - f_x x_{wu} - f_y y_{wu})g_v\\
&\qquad {} + (z_{uv} - f_x x_{uv} - f_y y_{uv}),\\
\end{align*}
then
\begin{align*}
f_{xx} &= A \left( \D{u}{x} \right)^2 + B \left( \D{v}{x} \right)^2 + 2 C \D{u}{x} \D{v}{x}\\
&= E
\left(
\left( \D{u}{x} \right)^2 g_{uu} + \left( \D{v}{x} \right)^2 g_{vv} + 2 \D{u}{x} \D{v}{x} g_{uv}
\right) + \frac{F}{D^2}
\end{align*}
where $F$ is a smooth function of $u,v, g, g_u, g_w$. We have similar formulae for $f_{xy}$ and $f_{yy}$
\begin{align*}
f_{yy} &= E
\left(
\left( \D{u}{y} \right)^2 g_{uu} + \left( \D{v}{y} \right)^2 g_{vv} + 2 \D{u}{y} \D{v}{y} g_{uv}
\right) + \frac{F}{D^2}\\
f_{xy} &= E
\left(
\D{u}{x}\D{u}{y} g_{uu} + \D{v}{x}\D{v}{y} g_{vv} + \left(\D{u}{x}\D{v}{y} + \D{v}{x} \D{u}{y}\right) g_{uv}
\right) + \frac{F}{D^2}
\end{align*}
where $F$ denotes different smooth functions of $(u,v,g,g_u, g_v)$.

To compute $f_t$, we differentiate $z = f(x,y,t)$
\begin{align*}
&z_w\D{w}{t} = f_t + (f_x x_w + f_y y_w)w_t\\
&f_t = (z_w - f_x x_w - f_y y_w) g_t = E g_t.
\end{align*}

Substituting into the equation for $f$
\[
f_t = (|Df|^2 + \ep)^{q-1} \lap f + (p-2)(|Df|^2 + \ep)^{q-2} ( f_{xx} f_x^2 + f_{yy} f_y^2 + 2 f_{xy} f_x f_y )
\]
and simplifying $E$ from both sides we then obtain an evolution equation for $g$ in the form
\[
g_t = A^{ij}(u,v,g,Dg) g_{ij} + B(u,v,g,Dg).
\]
On the other hand, the boundary condition $|Df| = 1$ becomes
\[
C(u,v,g,Dg) = 0
\]
for some function $C$.

We claim the following is true when $g \equiv 0$ (i.e at $t=0$) :
\begin{itemize}
\item $A^{ij}, B$ and $C$ are smooth functions of $u,v,g$ and $Dg$.
\item $(A^{ij})$ is positive definite.
\item $C$ is oblique.
\end{itemize}

Because the surface $\Ss$ and the vector field $T$ are both smooth, it is clear that $A^{ij}, B$ and $C$ are smooth functions of $u,v,g$ and $Dg$ whenever
\begin{align*}
D &= \D{x}{u}\D{y}{v} - \D{x}{v}\D{y}{u} \neq 0\\
E &= z_w - f_x x_w - f_y y_w \neq 0.
\end{align*}
The condition that $E \neq 0$ follows from our choice that $T$ is transverse to $\Ss$. The condition $D \neq 0$ is a consequence of the fact that the function $\Phi$ is invertible in a neighborhood of $\Ss$.

Next, to show that $(A^{ij})$ is positive definite, we write
\[
A^{ij} = (|Df|^2 + \ep)^{q-1} A^{ij}_1 + (p-2)(|Df|^2 + \ep)^{q-2} A^{ij}_2
\]
where $A_1^{ij}$ is the coefficient of $g_{ij}$ ($i,j \in \{u,v\}$) obtained from the transformation of $\lap f$ and $A_2^{ij}$ from $f_x^2 f_{xx} + f_y^2 f_{yy} + 2 f_x f_y f_{xy}$. We can compute explicitly
\begin{align*}
A_1 &=
\left( \begin{matrix}
\left( \D{u}{x} \right)^2 + \left( \D{u}{y} \right)^2 &
\D{u}{x} \D{v}{x} + \D{u}{x} \D{v}{y}\\
\\
\D{u}{x} \D{v}{x} + \D{u}{x} \D{v}{y} &
\left( \D{v}{x} \right)^2 + \left( \D{v}{y} \right)^2
\end{matrix} \right)\\
A_2 &=
\left( \begin{matrix}
\left( f_x \D{u}{x} + f_y \D{u}{y} \right)^2 &
\left( f_x \D{u}{x} + f_y \D{u}{y} \right) \left( f_x \D{v}{x} + f_y \D{v}{y} \right)\\
\\
\left( f_x \D{u}{x} + f_y \D{u}{y} \right) \left( f_x \D{v}{x} + f_y \D{v}{y} \right) &
\left( f_x \D{v}{x} + f_y \D{v}{y} \right)^2
\end{matrix} \right).
\end{align*}
It is obvious that $A_1$ and $A_2$ are non-negative definite. Furthermore, if $D \neq 0$, $A_1$ is actually positive definite ($\det(A_1) = D^2$). It then follows readily that $(A^{ij})$ is positive definite.

For the proof that $C$ is oblique, we refer to the Appendix of \cite{DaskalopoulosL2002}.

From the continuity, there must exist a positive number $\de$ such that those three claims are true for all $g$ that satisfies $|g|_{C^1(\Om_0)} < \de$. It is then a consequence of standard theory of quasilinear parabolic equation with oblique boundary condition (see for examples \cite{Lieberman1996}, Chapter 14) that there exists a solution $g$ up to a positive time $T$ to the problem.
\[
\begin{cases}
g_t = A^{ij} g_{ij} + B &\text{in $\Om_0 \times (0,T)$}\\
C(u,v,g,Dg) = 0 &\text{on $\partial \Om_0 \times (0,T)$}\\
g(.,0) = 0.
\end{cases}
\]
This solution is actually smooth up to the boundary for all $t \in [0,T)$ since $\Om_0$ is smooth and $C(u,v,g,Dg)$ is a smooth function of $(u,v,g,Dg)$. Choose a number $T'$ in $(0,T]$ such that $|g| < \eta$ on $\Om_0 \times (0,T')$. Reverting back to the original coordinates system we then obtain a solution to the regularized problem \eqref{Pe} up to $T'$. It is clear that the domain $\Om_t$ is smooth and the solution $f$ is smooth up to the free-boundary for all time $0 < t < T'$.
\end{proof}

%%%%%%%%%%%%%%%%%%%%%%%%%%%  First Derivative Bound
\section{Gradient Estimate}\label{s-gradient}
%%%%%%%%%%%%%%%%%%%%%%%%%%%%%%%%%%%%%%%%%%%%
\begin{lemma}\label{gradient}
Assume the same hypotheses as in the Lemma \ref{short-time}. Furthermore, $f_0$ satisfies $|Df_0| \leq 1$ on $\Om_0$. If $f$ is a solution of the problem \eqref{Pe}, then
\begin{equation*}
|Df(\x,t)| < 1
\end{equation*}
for all $(\x,t) \in \Omega$.
\end{lemma}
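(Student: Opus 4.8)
The strategy is to run the maximum principle on the gradient square $v:=|Df|^2$. Recall that, by Lemma~\ref{short-time} (and because \eqref{Pe} is uniformly parabolic, being non-degenerate: $(|Df|^2+\ep)^{q-1}\ge\ep^{q-1}>0$ since $q>1$), the solution $f$ is smooth on $\{f>0\}$ and extends smoothly up to the free boundary and up to $t=0$, so $v$ is smooth there. Write $\lap_p^\ep$ in non-divergence form as $\lap_p^\ep f=a^{ij}(Df)\,f_{ij}$, where
\[
a^{ij}(p)=(|p|^2+\ep)^{q-1}\delta_{ij}+(p-2)(|p|^2+\ep)^{q-2}p_ip_j
\]
is symmetric and positive definite. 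Differentiating $f_t=a^{ij}(Df)f_{ij}$ in $x_k$, multiplying by $2f_k$, summing over $k$, and using $a^{ij}v_{ij}=2a^{ij}f_{ik}f_{jk}+2a^{ij}f_kf_{ijk}$ together with $f_kf_{lk}=\tfrac12 v_l$, I would arrive at the pointwise identity
\[
v_t=a^{ij}v_{ij}+b^l v_l-2a^{ij}f_{ik}f_{jk}\quad\text{in }\{f>0\},\qquad b^l:=\frac{\partial a^{ij}}{\partial p_l}(Df)\,f_{ij},
\]
the $b^l$ being smooth and bounded on compact subsets up to the free boundary. As $(a^{ij})$ is positive definite, the last term is $\le 0$, so $v$ is a classical subsolution of the parabolic operator $Lw:=w_t-a^{ij}w_{ij}-b^lw_l$, which has no zeroth order term.

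Applying the weak maximum principle to $v$ on $\Om\cap\{t<T''\}$ for each $T''$ less than the existence time (the coefficients of $L$ being bounded and $L$ uniformly parabolic there since $f$ is smooth up to the compact closure), I would conclude $v\le 1$: on the initial slice $\closure{\Om_0}\times\{0\}$ one has $v=|Df_0|^2\le 1$ by hypothesis ($v=1$ on $\partial\Om_0$); on the lateral free boundary $v=|Df|^2=1$ by the combustion condition; and $L1=0$, so comparison with the constant $1$ applies. Letting $T''$ increase to the existence time gives $|Df|\le 1$ on all of $\Om$.

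To upgrade this to strict inequality, suppose $v(\x_0,t_0)=1$ for some $(\x_0,t_0)\in\Om$ (necessarily $t_0>0$). Since $v\le 1$ everywhere and $v$ is an $L$-subsolution, the strong maximum principle forces $v\equiv 1$ at every point of $\Om$ joinable to $(\x_0,t_0)$ by a path along which time is non-decreasing; in particular $v\equiv 1$ on an open box $B_\rho(\x_0)\times(t_0-\delta,t_0)\subset\Om$. There $v_t=Dv=D^2v=0$, so the identity above reduces to $a^{ij}f_{ik}f_{jk}\equiv 0$, and positive-definiteness forces every column of the Hessian to vanish, i.e. $D^2f\equiv 0$; hence $f(\cdot,t)$ is affine on $B_\rho(\x_0)$ for each $t\in(t_0-\delta,t_0)$. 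Fix such a $t_1$. The set $\{\x\in\Om_{t_1}:v(\x,t_1)=1\}$ is relatively closed in $\Om_{t_1}$ by continuity, non-empty, and relatively open (any point in it is a maximum point of $v$ on $\Om$, and the strong maximum principle spreads the value $1$ to a spatial neighborhood within $\Om_{t_1}$); since $\Om_{t_1}$ is connected — being a small perturbation of $\Om_0$ in the construction of Lemma~\ref{short-time} — it is all of $\Om_{t_1}$. So $v\equiv 1$ on $\Om\cap\{t_0-\delta<t<t_0\}$, hence $D^2f\equiv 0$ there, and $f(\cdot,t_1)=\mathbf{a}\cdot\x+c$ on the whole of $\Om_{t_1}$ with $|\mathbf{a}|=1$. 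But then $\partial\Om_{t_1}\subset\{\mathbf{a}\cdot\x+c=0\}$ while $\Om_{t_1}$ lies in the open half-space $\{\mathbf{a}\cdot\x+c>0\}$; the ray from any point of $\Om_{t_1}$ in the direction $\mathbf{a}$ remains strictly inside that half-space yet must leave the bounded set $\Om_{t_1}\subset\Om_0$, so its first exit point lies on $\partial\Om_{t_1}$ but off the hyperplane — a contradiction. Therefore $|Df|<1$ on $\Om$.

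The differentiations in the first step are routine; the only point worth flagging there is the cancellation $f_kf_{lk}=\tfrac12 v_l$, which sweeps all the first-order corrections coming from $a^{ij}(Df)$ into the harmless term $b^lv_l$. The real work — and the main obstacle — is the strict inequality: the weak maximum principle only yields $|Df|\le 1$, and removing the possibility of interior contact requires the strong maximum principle (to propagate $v\equiv 1$ first into a space-time box, then slice by slice across a whole time-section) combined with the rigidity that $v\equiv 1$ on an open set forces $f$ to be spatially affine, which a bounded $\Om_t$ cannot support.
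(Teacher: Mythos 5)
Your argument is correct, but it takes a genuinely different (though equally classical) Bernstein-type route from the paper's. The paper differentiates the equation in a fixed direction $\la$ and observes that $w=f_\la$ satisfies the clean linear equation $w_t=a^{ij}w_{ij}+(f_{ij}Da^{ij})\cdot Dw$ with no zeroth-order term and no leftover curvature term; since $f_\la\le|Df|\le 1$ on the parabolic boundary, the strong maximum principle immediately gives $f_\la<1$ inside (the alternative $f_\la\equiv 1$ being absurd, e.g.\ at the interior maximum of $f$ where $Df=0$), and taking the supremum over $\la$ yields $|Df|<1$. You instead work with $v=|Df|^2$, which is only a \emph{subsolution} because of the extra term $-2a^{ij}f_{ik}f_{jk}\le 0$; this forces you to do real work for the strict inequality, namely the propagation of $v\equiv 1$ by the strong maximum principle followed by the rigidity step $a^{ij}f_{ik}f_{jk}\equiv 0\Rightarrow D^2f\equiv 0\Rightarrow f$ affine, which is incompatible with a bounded positivity set. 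Your computation and the rigidity argument are sound (including the column-by-column conclusion from positive definiteness and the exit-ray contradiction), and the connectedness of $\Om_{t}$ you invoke is indeed supplied by the fixed-domain construction of Lemma~\ref{short-time}. What the paper's choice buys, besides brevity, is that the same equation for $f_\nu$ is reused verbatim in Lemma~\ref{f-nu-nu} via Hopf's lemma to get $f_{\nu\nu}<0$ on the free boundary, a fact your $|Df|^2$ formulation does not deliver directly; what your choice buys is a single scalar quantity $v$ rather than a family indexed by $\la$, at the price of the good term and the affine-rigidity step.
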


\begin{proof}
We will show an equivalent fact that
\[
f_{\la}(\x,t) < 1
\]
for any unit vector $\la$.

Let
\begin{equation*}
a^{ij}(Df) = (|Df|^2+\ep)^{q-1} \de_{ij} + 2(q-1)(|Df|^2 + \ep)^{q-2} f_i f_j
\end{equation*}
where $\de_{ij}$ is the Kronecker delta function. Recall that we define $q = p/2$ throughout this work. Then the evolution equation of $f$ can be written in non-divergent form as
\begin{align*}
f_t &= (|Df|^2 + \ep)^{q-1} \lap f + 2(q-1)(|Df|^2 + \ep)^{q-2} f_{ij} f_i f_j\\
&= a^{ij} f_{ij}.
\end{align*}
We compute the evolution equation for $f_\la$
\begin{align*}
f_{\la t} &= a^{ij} f_{\la ij} + (f_{ij}\,Da^{ij})Df_\la.
\end{align*}
Since this equation satisfies the Strong Maximum Principle, $f_\la$ must attain its maximum value on the parabolic boundary of $\Om$. Because $f_\la \leq 1$ on the parabolic boundary of $\Omega$, it then follows that
\[
f_\la < 1
\]
in $\Om$ for all unit vector $\la$.
\end{proof}

\begin{lemma}\label{f-nu-nu}
Assume the same as in the last lemma, then at any point $\x_0$ on the free-boundary $\partial\Om_t$
\begin{equation*}
f_{\nu\nu} (\x_0,t) < 0
\end{equation*}
where $\nu$ is the inward normal vector at $\x_0$ with regards to $\partial\Om_t$.
\end{lemma}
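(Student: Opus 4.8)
The plan is to exploit the overdetermined conditions $f=0$ and $|Df|=1$ on the free boundary together with the strict gradient bound $|Df|<1$ inside $\Om$ from Lemma \ref{gradient}, and show that $f_{\nu\nu}$ is the quantity that measures how $|Df|$ transitions from its interior value $<1$ to the boundary value $1$. First I would set up coordinates at a fixed free-boundary point $\x_0$ at time $t$: let $\nu$ be the inward spatial unit normal to $\partial\Om_t$ at $\x_0$, so that $Df(\x_0,t)=\nu$ (since $f=0$ on $\partial\Om_t$, $Df$ points along the inward normal, and $|Df|=1$). Consider the function $\x\mapsto f_\nu(\x,t)$ along the inward normal ray $\x_0+h\nu$ for small $h>0$. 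By Lemma \ref{gradient}, for $h>0$ we have $f_\nu(\x_0+h\nu,t)\le |Df(\x_0+h\nu,t)|<1$, while $f_\nu(\x_0,t)=1$. Hence
\[
f_{\nu\nu}(\x_0,t)=\lim_{h\to 0^+}\frac{f_\nu(\x_0+h\nu,t)-f_\nu(\x_0,t)}{h}\le 0,
\]
which already gives the non-strict inequality. The work is to upgrade $\le$ to $<$.

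To get strictness, I would argue that $f_{\nu\nu}(\x_0,t)=0$ is impossible. Suppose $f_{\nu\nu}(\x_0,t)=0$. The idea is to compare $f$ with a suitable barrier, or to apply a Hopf-type lemma to the quantity $1-|Df|^2$ (or to $f_\nu$) which is non-negative near the free boundary and vanishes at $\x_0$. One checks that $w=|Df|^2$ satisfies a uniformly parabolic equation in any region where $|Df|$ is bounded away from $0$ — and near the free boundary $|Df|$ is close to $1$, so this holds in a one-sided neighborhood $N$ of $\x_0$ inside $\Om$. Differentiating $f_t=a^{ij}f_{ij}$ as in Lemma \ref{gradient} and contracting with $Df$, the function $v=1-|Df|^2$ satisfies $v_t=a^{ij}v_{ij}+b^i v_i - 2a^{ij}f_{ik}f_{jk}$, where the last term has a sign: since $a^{ij}$ is positive definite, $a^{ij}f_{ik}f_{jk}\ge 0$, so $v$ is a supersolution, $v_t-a^{ij}v_{ij}-b^iv_i\le 0$. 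Since $v>0$ in $N$ and $v(\x_0,t)=0$, the parabolic Hopf lemma gives that the inward normal derivative of $v$ at $\x_0$ is strictly negative: $\partial_\nu v(\x_0,t)<0$. But $\partial_\nu v=-2f_i f_{i\nu}=-2f_{\nu\nu}$ at $\x_0$ (using $Df=\nu$ there), so $f_{\nu\nu}(\x_0,t)>0$, contradicting the already-established $f_{\nu\nu}(\x_0,t)\le 0$. Wait — the signs must be arranged so the contradiction is with $f_{\nu\nu}\le 0$; concretely, $\partial_\nu v(\x_0,t)<0$ forces $f_{\nu\nu}(\x_0,t)>0$, contradicting $f_{\nu\nu}(\x_0,t)\le 0$ unless the Hopf lemma instead only yields $f_{\nu\nu}(\x_0,t)<0$ directly. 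The cleanest route is therefore to apply the strong maximum principle / Hopf lemma to $f_\nu$ itself: $f_\nu$ satisfies the linear parabolic equation from Lemma \ref{gradient}, $f_\nu\le 1$ everywhere, and $f_\nu\equiv 1$ is excluded on $\partial\Om_t\cap\{0<t<T\}$; the Hopf boundary-point lemma then yields strict normal decay, i.e. $f_{\nu\nu}(\x_0,t)<0$.

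The main obstacle I anticipate is the regularity needed to justify these pointwise second-derivative statements and the application of the Hopf lemma: one needs $f\in C^2$ up to the free boundary near $\x_0$, and one needs the coefficients $a^{ij}$ of the linearized equation to be uniformly parabolic and Hölder continuous in the relevant one-sided neighborhood. Uniform parabolicity is fine because $|Df|$ is near $1$ there (bounded away from $0$), so the $\ep$-regularized operator is non-degenerate; the regularity of $f$ up to $\partial\Om_t$ comes from Lemma \ref{short-time} (the solution is smooth up to the free boundary for $0<t<T'$). A secondary technical point is making sure $\nu$, the spatial inward normal to $\partial\Om_t$, is itself smooth in $\x$, so that $f_{\nu\nu}$ is a genuine second derivative and not merely a directional second difference quotient; since $\partial\Om_t$ is smooth (again from Lemma \ref{short-time}), this is available. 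With these in hand, the argument reduces to a one-line application of Hopf's lemma to $f_\nu$, and the non-strict inequality from Lemma \ref{gradient} pins down the direction of the contradiction.
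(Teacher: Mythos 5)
Your final argument --- apply the parabolic Hopf boundary-point lemma to $f_\nu$, which satisfies the linear equation $f_{\nu t}=a^{ij}f_{\nu ij}+(f_{ij}Da^{ij})\cdot Df_\nu$ from Lemma \ref{gradient}, is $<1$ in $\Om$ and attains its maximum value $1$ at $(\x_0,t)$ --- is exactly the paper's proof, and it is correct. The detour through $v=1-|Df|^2$ (where your sign bookkeeping goes astray) is discarded in your own write-up and is not needed.
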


\begin{proof}
Apply Hopf's Lemma to the evolution equation for $f_\nu$ from the last lemma, observing that $f_\nu$ attains the maximum value of 1 at $(\x_0,t)$.
\end{proof}

%%%%%%%%%%%%%%%%%%%%%%%%%%%  Convexity
\section{Convexity}\label{s-convexity}
%%%%%%%%%%%%%%%%%%%%%%%%%%%%%%%%%%%%%%%%%%%%%%

In this section we will show that the time-section $\Om_t$ remains convex and the function $f(.,t)$ remains concave on $\Om_t$. Normally, for this kind of question, the main difficulty lies in showing that $\Om_t$ remains convex. The arguments for the case $p=2$ as in \cite{Petrosyan2001} or \cite{DaskalopoulosL2002} do not translate directly to the case $p > 2$. On the other hand, our argument here can be simplified to give a new and simple proof for the case $p=2$. The argument relies heavily on the Neumann boundary condition $|Df| = 1$.

\begin{lemma}\label{convexity}
Assume the same hypotheses as in the Lemma \ref{short-time}. Furthermore, assume that  $\Om_0$ is strictly convex and $f_0$ is strictly concave on $\Om_0$. If $f$ is a solution to the problem \eqref{Pe} up to some positive time $T$, then $\Om_t$ is strictly convex and $f(.,t)$ is strictly concave for all $t \in [0,T)$.
\end{lemma}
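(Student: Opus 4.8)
The plan is to prove concavity of $f(\cdot,t)$ and convexity of $\Om_t$ simultaneously, using a parabolic maximum principle argument applied to the second derivatives of $f$ in tangential directions, together with the Neumann condition $|Df|=1$ to control the behavior at the free boundary. Concretely, I would introduce the concavity function or, more directly, argue that the largest eigenvalue of the spatial Hessian $D^2f$ stays negative. Let $\mu(\x,t)$ denote the maximal eigenvalue of $D^2 f(\x,t)$; equivalently, one studies $f_{\tau\tau}$ for unit vectors $\tau$. The goal is to show $\mu<0$ in $\Om$, which simultaneously gives strict concavity of $f(\cdot,t)$ and, as a geometric consequence of the level set $\{f=0\}=\partial\Om_t$ being the boundary of a superlevel set of a concave function with nonvanishing gradient there, strict convexity of $\Om_t$.

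The main steps, in order, would be: (i) Derive the evolution equation for $v=f_{\tau\tau}$ (or for $\mu$ via the usual eigenvalue-perturbation trick of freezing the eigenvector at the maximum point), obtaining a parabolic inequality of the form $v_t \le a^{ij}v_{ij} + (\text{lower order})\cdot Dv + c\,v$ where $a^{ij}$ is the same uniformly elliptic coefficient matrix as in Lemma~\ref{gradient}; the zeroth-order coefficient $c$ may have an unfavorable sign, but since we want an upper barrier $v<0$ and $c\cdot v$ then has a good sign, or else one uses that any interior maximum of a would-be nonnegative $v$ is ruled out by the strong maximum principle, this is not an obstruction. (ii) Check the parabolic boundary: at $t=0$ we have $f_0$ strictly concave by hypothesis, so $\mu(\cdot,0)<0$ on $\closure{\Om_0}$; on the lateral boundary $\partial\Om_t$ we must show that the tangential second derivatives of $f$ are strictly negative, and this is exactly where $|Df|=1$ enters. (iii) On $\partial\Om_t$, differentiate the constraint $|Df|^2=1$ tangentially twice: writing $f=0$ and $|Df|=1$ on the free boundary, one gets relations expressing $f_{\tau\tau}$ on the boundary in terms of $f_{\nu\nu}$ (which is $<0$ by Lemma~\ref{f-nu-nu}) and the curvature of $\partial\Om_t$; so one is led to couple the sign of $f_{\tau\tau}$ with the sign of the boundary curvature, i.e.\ with the convexity of $\Om_t$ itself. (iv) Close the loop: set up a continuity/bootstrap argument in time — on a maximal interval where $\Om_t$ is convex, the boundary curvature has the right sign, which forces $f_{\tau\tau}\le 0$ on $\partial\Om_t$, hence by the maximum principle $f_{\tau\tau}<0$ in the interior, hence $f(\cdot,t)$ is strictly concave, hence $\Om_{t}$ stays (strictly) convex a little longer; strict inequalities and Hopf's lemma prevent the maximal interval from being a proper subinterval of $[0,T)$.

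The hard part will be step (iii)–(iv): getting a clean, self-consistent relation at the free boundary between $f_{\tau\tau}$, $f_{\nu\nu}$, and the principal curvatures of $\partial\Om_t$, and then arranging the argument so that convexity of the domain and concavity of the function are not assumed circularly but are propagated together. The Neumann condition is what makes this work: differentiating $|Df|=1$ along $\partial\Om_t$ kills the ``bad'' term that would otherwise appear, leaving $f_{\nu\nu}$ times a curvature term, and Lemma~\ref{f-nu-nu} supplies $f_{\nu\nu}<0$. One subtlety is that a priori we only know $\partial\Om_t\in C^1$ and $f\in C^1$ up to the boundary from Lemma~\ref{short-time} the solution is smooth up to the free boundary for $t\in(0,T')$, so the second-derivative computations are legitimate there; the endpoint $t=0$ is handled by the hypothesis on $f_0$ directly. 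A second subtlety is the sign of the zeroth-order term in the Hessian evolution and of the gradient-dependent coefficients, which requires using $|Df|<1$ from Lemma~\ref{gradient} to keep $(|Df|^2+\ep)^{q-2}$ bounded and the coefficients well-behaved; these are routine once the structure is set up. I expect the degeneracy issue itself to be a non-problem here precisely because we are working with the regularized operator $\lap_p^\ep$, whose coefficients $a^{ij}$ are smooth and uniformly elliptic on the compact sets where $|Df|$ stays bounded.
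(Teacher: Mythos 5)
Your overall strategy --- propagate negativity of the largest eigenvalue of $D^2 f$ by a maximum-principle argument, freezing the eigenvector at the maximum point in the interior and exploiting the over-determined boundary conditions at the free boundary --- is the same as the paper's, and your interior step is essentially right: at a maximal eigendirection $\la'$ one has $f_{\la\la'}=0$ for $\la\perp\la'$, so $Df_{\la'}=f_{\la'\la'}\la'$ and the troublesome term $(2f_{\la' ij}Da^{ij}+f_{ij}D(a^{ij})_{\la'})\cdot Df_{\la'}$ becomes a genuine zeroth-order term in $f_{\la'\la'}$, harmless for an upper barrier.

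The gap is in your steps (iii)--(iv), which you correctly flag as the hard part but do not close. Your loop is: $\Om_t$ convex $\Rightarrow$ $f_{\tau\tau}\le 0$ on $\partial\Om_t$ (since $f_{\tau\tau}=-f_\nu\kappa_\tau$ there) $\Rightarrow$ maximum principle $\Rightarrow$ $f_{\tau\tau}<0$ inside $\Rightarrow$ convexity persists. But the strong maximum principle only gives strict negativity in the \emph{open} interior; at the maximal time $t^*$ of strict convexity you are left with $f_{\la\la}\le 0$ on $\closure{\Om_{t^*}}$ and possibly $f_{\la'\la'}(\x',t^*)=0$ at a boundary point for a tangential $\la'$ (a principal curvature of $\partial\Om_{t^*}$ vanishing), and nothing in your argument excludes this or lets you restart past $t^*$. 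What is needed --- and what the paper supplies --- is a quantitative Hopf-type statement at exactly such a point: the paper builds a barrier $v>0$ solving the linearized equation with zero Dirichlet data, so that $v_\nu>0$ on the lateral boundary by Hopf's lemma, proves $v+f_{\la'\la'}<0$ up to a first touching time, and at a boundary touching point gets $f_{\nu\la'\la'}\le -v_\nu<0$ from maximality of $V=v+f_{\la'\la'}$. This is then contradicted by the identity $f_{\nu\la'\la'}=0$, obtained by differentiating $f=0$ and $|Df|^2=1$ twice tangentially at a point where $f_{\la'\la'}=0$, the corresponding boundary curvature vanishes, and $Df_{\la'}=0$ (the last from eigendirection maximality together with $f_{\nu\la'}=0$, which follows from $|Df|=1$ on the boundary and $|Df|<1$ inside). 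So the relation you actually need from the Neumann condition is one for the \emph{third} derivative $f_{\nu\tau\tau}$ at a degenerate point, played against a strict Hopf inequality from a barrier; the relation you propose, between $f_{\tau\tau}$, $f_{\nu\nu}$ and the curvature, cannot by itself prevent the curvature from degenerating to zero at $t^*$. (Note also that Lemma \ref{f-nu-nu} enters not to supply a signed curvature term but to force the degenerate direction $\la'$ to be tangential.)
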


\begin{proof}
We will show that
\begin{equation*}
f_{\la\la} (\x,t) < 0
\end{equation*}
for any point $(\x,t) \in \closure{\Omega}$, any unit vector $\la$ and any $t \in [0,T']$ where $T'$ is any number strictly less than $T$. Clearly this implies that $\Om_t$ is strictly convex and $f$ is strictly concave for all $t \in[0,T)$.

First, we compute the evolution equation of $f_{\la\la}$,
\begin{align*}
f_t &= a^{ij} f_{ij}\\
f_{\la t} &= a^{ij} f_{\la ij} + (f_{ij} Da^{ij})\cdot Df_\la\\
f_{\la\la t} &= a^{ij} f_{\la\la ij} + 2 (f_{\la ij} Da^{ij})\cdot Df_{\la} + (f_{ij}Da^{ij})\cdot Df_{\la\la} + (f_{ij} (Da^{ij})_\la)\cdot Df_\la\\
&= a^{ij} f_{\la\la ij} + (f_{ij} Da^{ij})\cdot Df_{\la\la} + ( 2f_{\la ij} Da^{ij} + f_{ij} D(a^{ij})_\la )\cdot Df_\la
\end{align*}
Since $f$ is smooth for all $t \in (0,T)$, there exists a finite number $C(T')$ such that for any unit vector $\la$ and any point $(\x,t) \in \closure{\Om} \cap \{ 0 < t \leq T' \}$
\begin{equation*}
| 2f_{\la ij} Da^{ij} + f_{ij} D(a^{ij})_\la | < C.
\end{equation*}

Choose a smooth function $v_0$ on $\R^n$ such that
\begin{equation*}
\begin{cases}
0 < v_0  < - (f_0)_{\la\la} &\text{in $\Om_0$}\\
v_0 = 0 &\text{on $\partial\Om_0$}.
\end{cases}
\end{equation*}
Such $v_0$ exists because $f_0$ is strictly concave on $\Om_0$. Let $v$ be the solution of the Cauchy-Dirichlet problem
\begin{equation*}
\begin{cases}
v_t = a^{ij} v_{ij} + (f_{ij} Da^{ij})\cdot Dv - C v &\text{in $\Om$}\\
v = 0 &\text{on $\partial{\Om} \cap \{ 0 < t < T \}$ }\\
v(.,0) = v_0 &\text{in $\Om_0$}.
\end{cases}
\end{equation*}
Applying Strong Maximum Principle and Hopf's Lemma to $v$ we easily deduce that
\begin{equation*}
\begin{cases}
v > 0 &\text{in $\Om$}\\
|Dv| > 0 &\text{on $\partial\Om \cap \{ 0 < t < T \}$}.
\end{cases}
\end{equation*}

We are going to show that
\begin{equation}\label{v-convex}
v + f_{\la\la} < 0
\end{equation}
for all $t \in [0,T']$ and all unit vector $\la$. Assuming that it is not the case, i.e there exists some point $(\x',t')$ and some unit vector $\la'$ such that
\begin{equation*}
(v + f_{\la'\la'})(\x',t') = 0
\end{equation*}
and
\[
v + f_{\la\la} < 0
\]
for all $t < t'$ and all unit vector $\la$. In other words, $t'$ is the first time \eqref{v-convex} fails. We consider two cases, $(\x',t')$ is an interior point or a boundary point. But first, note that we have the evolution equation for $V = v + f_{\la'\la'}$
\begin{equation}\label{V-evolution}
V_t = a^{ij}V_{ij} + (f_{ij}Da^{ij})\cdot DV +  (2f_{{\la'} ij} Da^{ij} + f_{ij} D(a^{ij})_{\la'} )\cdot Df_{\la'} - Cv.
\end{equation}
If $(\x', t')$ is an interior point, then because it is a maximum point of $V$ in $\Om_{t'}$, we have
\begin{align*}
a^{ij} V_{ij} &\leq 0\\
DV &= 0.
\end{align*}
Substitute into \eqref{V-evolution} we have
\[
V_t \leq ( 2f_{\la' ij} Da^{ij} + f_{ij} D(a^{ij})_\la' )\cdot Df_\la' - Cv.
\]
Because at the point $(\x',t')$
\begin{align*}
v + f_{\la'\la'} &\geq v + f_{\la\la}\\
\intertext{or}
f_{\la'\la'} &\geq f_{\la\la}
\end{align*}
for any other unit vector $\la$, we have
\begin{equation*}
f_{\la\la'} = 0
\end{equation*}
for any $\la \bot \la'$. Hence,
\begin{align*}
V_t &\leq \left((2f_{\la ij} Da^{ij} + f_{ij} D(a^{ij})_\la) \cdot \lambda'\right) f_{\la'\la'} - Cv\\
&< -C f_{\la'\la'} - Cv \quad\text{(remember $f_{\la'\la'} = -v < 0$)}\\
&= 0
\end{align*}
which contradicts the assumption that $(\x',t')$ is the first time $V = 0$. So $(\x',t')$ cannot be an interior point.

If $\x'$ is on $\partial\Om_{t'}$. Again, denote by $\nu$ the inward normal unit vector to $\partial\Om_{t'}$ at $x'$. Then at this point we have from definition of $(\x',t')$ and $\la'$,
\begin{align*}
(v + f_{\la'\la'})_\nu &\leq 0\\
f_{\nu \la'\la'} &\leq -v_\nu < 0.
\end{align*}

We will show that on the other hand
\begin{equation}\label{f-nu-la-la}
f_{\nu\la'\la'} = 0.
\end{equation}

We have from the Lemma \ref{f-nu-nu} that
\[
f_{\nu\nu} < 0.
\]
We also have as a consequence of the fact that $|Df| = 1$ on the free-boundary and $|Df| < 1$ in the interior that $f_{\nu\lambda} = 0$ for any tangential unit vector $\la$. Hence as a consequence of the fact $f_{\la'\la'} = 0$, $\la'$ must be a tangential vector of $\partial\Om_{t'}$. Otherwise, there would be a tangential vector $\la$ that lies on the same plane with $\nu$ and $\la'$ such that
\[
f_{\la\la} > 0
\]
which contradicts our assumption on $(\x',t')$ and $\la'$.

Without loss of generality, we can assume that $\nu = e_1$ and $\la' = e_2$. Because $e_1$ is the unit normal vector of $\partial\Om_{t'}$ at $x'$, in a small neighborhood of $\x'$, we can write $\partial\Om_{t'}$ as the graph of a smooth function
\[
x_1 = \ga(x_2, x')
\]
where $x' = (x_3,...x_n)$. From here to the end of the proof, we will use $\ga'$ and $\ga''$ to denote the first and second derivatives of $\ga$ with regards to $x_2$. Differentiate $f = 0$ with regards to $e_2$ we have
\begin{gather*}
f_1 \ga' + f_2 = 0\\
\intertext{or $\ga' = 0$ since $f_1 = 1$, $f_2 = 0$. Differentiate one more time and disregard all terms containing $\ga'$ we have}
f_1 \ga'' + f_{22} = 0\\
\intertext{and so $\ga'' = 0$ since $f_{22} = 0$ due to our assumption. Differentiate $Df\cdot Df = 1$ twice with regards to $e_2$ and disregard all terms containing $\ga'$ or $\ga''$ we obtain}
Df \cdot Df_{22} + Df_2 \cdot Df_2 = 0\\
\intertext{or}
f_{122} + |Df_2|^2 = 0.
\end{gather*}
As above, because $f_{ii} \leq 0 = f_{22} \;\forall\, i$, we have
\[
f_{2 i} = 0
\]
for all $i \neq 2$. But $f_{22} = 0$ as well, so $Df_2 = 0$. Hence
\[
f_{122} = 0
\]
which is exactly what we want to show in \eqref{f-nu-la-la}. We then have a contradiction. In other words
\[
f_{\la\la} < 0
\]
for all $(x,t) \in \closure{\Om} \cap \{ 0 < t < T \}$ and all unit vector $\la$ or equivalently, $\Om_t$ is strictly convex and $f(.,t)$ is strictly concave in $\Om_t$ for all $t \in [0,T)$.
\end{proof}

%%%%%%%%%%%%%%%%%%%%%%%%%%  Regularity near Free-Boundary
\section{Regularity near the Free-Boundary}\label{s-regularity}
%%%%%%%%%%%%%%%%%%%%%%%%%%%%%%%%%%%%%%%%%%%%%%%%%%%%%%%%%%
In this section, we show that the degeneracy $|Df| = 0$ is kept away from the free-boundary. Consequently, the free-boundary is smooth, uniformly in $\ep$. It enables us to show that the limiting function obtained by letting $\ep$ go to 0 satisfies the boundary condition of the original problem. The proof depends crucially on the concavity of $f$.

We introduce some notations. We denote by $B_r(\x)$ the disk of radius $r$ around $x$
\[
B_r(\x) = \{ \y \in \R^n \st |\y - \x| < r \}
\]
when $\x \in \R^n$ and $r \in \R$. We write $B_r$ for $B_r(0)$. We also define
\[
A_r = \{ (\x,t) \st \dist(\x, \partial\Om_t) < r \} \cap \Om.
\]
For any point $\x = (x_1, x_2,...x_n)$, we define
\begin{align*}
\psi_1(\x) &= x_1\\
\psi'(\x) &= (x_2,...,x_n).
\end{align*}
\begin{lemma}\label{regularity}
Assume all hypotheses as in the Lemma \ref{convexity}. Assume also that there exist positive numbers $r, R$ and $m$ and a point $\x_0$ such that
\begin{align}\label{non-vanishing}
&B_r(\x_0) \subset \Om_t \subset B_R(\x_0) \quad\text{for all $t\in [0,T)$}\\
&f(\x,t) > m \quad\text{for all $(\x,t) \in B_r(\x_0) \times [0,T)$}.
\end{align}
Then for any $0 < T_1 < T$ and $k \in \Z^+$, there exist positive numbers $d(r, R, m, T_1)$ and $C(d,k)$ such that
\begin{equation*}
|f(.,t)|_{C^k(A_d \cap \{ T_1 \leq t < T \})} < C.
\end{equation*}
\end{lemma}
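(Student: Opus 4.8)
The plan is to establish the $C^k$ bound near the free-boundary in two stages: first a non-degeneracy estimate $|Df| \geq c > 0$ in a fixed one-sided neighborhood $A_d$ of $\partial\Om_t$, uniformly in $t$ and $\ep$; then, once the gradient is bounded away from zero there, the regularized equation \eqref{Pe} is uniformly parabolic on $A_d$, and interior-type and boundary Schauder/$L^p$ estimates give the higher-derivative bounds. Throughout I would use the convexity of $\Om_t$ and the concavity of $f(\cdot,t)$ from Lemma \ref{convexity} together with the two-sided ball condition \eqref{non-vanishing}, which in particular gives a uniform lower bound on the inradius and a uniform upper bound on the diameter of $\Om_t$.

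For the non-degeneracy, the key point is that $f$ is concave, vanishes on $\partial\Om_t$, has $|Df| = 1$ on $\partial\Om_t$ (so $f_\nu = 1$ there), and $f > m$ on $B_r(\x_0)$. I would argue geometrically: for a point $\x$ near $\x_0$-centered-boundary, concavity forces $f$ to lie below the tangent plane at any point and above the chord to the boundary; combining $f_\nu = 1$ at the boundary with $f_{\nu\nu} < 0$ (Lemma \ref{f-nu-nu}) is not by itself enough to get a \emph{uniform} lower bound on $f_\nu$ in a neighborhood, so instead I would use a barrier/comparison argument. Fix a boundary point $\x_1 \in \partial\Om_t$; by the interior ball condition there is a ball $B_r(\x_0) \subset \Om_t$, and concavity of $f$ with $f \geq m$ on that ball and $f = 0$ at $\x_1$ gives, along the segment from $\x_1$ toward $\x_0$, a definite slope: explicitly, for the unit vector $e$ pointing from $\x_1$ into $\Om_t$ along that segment, $f(\x_1 + s e) \geq$ the linear interpolant between $0$ at $s=0$ and $m$ at $s = |\x_1 - \text{(nearest point of $B_r(\x_0)$)}|$, and by concavity the directional derivative $f_e(\x_1 + s e)$ is nonincreasing in $s$, hence bounded below by $m / (2R)$ on the whole segment. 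Since $|Df| < 1$ everywhere (Lemma \ref{gradient}) and $|Df|$ is continuous, a compactness/continuity argument along the inward normals, using that $f_\nu \to 1$ at the boundary and the above slope estimate propagates inward, yields $|Df| \geq c(r,R,m) > 0$ on a layer $A_d$ with $d = d(r,R,m)$ chosen small. I expect this step to be the main obstacle: one must be careful that $d$ and $c$ depend only on $r, R, m$ (and not on $\ep$ or $t$), which is where concavity and the two-sided ball condition are essential — they prevent the free-boundary from being too flat or too wiggly at scale $d$.

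Once $|Df| \geq c$ and $|Df| < 1$ on $A_d$, on this region the operator $\lap_p^\ep f = \divg((|Df|^2 + \ep)^{q-1} Df)$ has ellipticity constants controlled above and below independently of $\ep \in (0,1]$, so $f$ solves a uniformly parabolic quasilinear equation there. Away from $\partial\Om_t$, interior parabolic regularity (e.g. DiBenedetto, or classical Schauder once we bootstrap) gives interior $C^k$ bounds. Near $\partial\Om_t$, I would flatten the boundary: by the interior/exterior ball condition and the gradient non-degeneracy, $\partial\Om_t$ is locally a $C^{1,\al}$ graph (indeed the level-set $\{f = 0\}$ with $|Df| \neq 0$), and the combustion condition $|Df| = 1$ is an oblique (Neumann-type) condition as already noted in Lemma \ref{short-time}. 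Applying the Schauder theory for oblique-derivative parabolic problems (Lieberman, Chapter 14, as cited) to the flattened problem gives $f \in C^{2,\al}$ up to the boundary with bounds depending on $d$, $c$, and the $C^{1,\al}$ norm of the boundary; the latter is itself controlled by the former via the equation. Then bootstrapping — differentiating the equation and the boundary condition — upgrades this to $C^k$ for any $k$, with constant $C(d,k)$. Restricting to $t \in [T_1, T)$ and using that all constants entering the estimates (ellipticity, boundary regularity, and the parabolic interior distance in time $T_1$) are uniform there gives the claimed bound $|f(\cdot,t)|_{C^k(A_d \cap \{T_1 \leq t < T\})} < C$. I would remark that the whole argument is $\ep$-uniform precisely because the non-degeneracy constant $c$ is, which is the crux of why this lemma is useful for the later passage $\ep \to 0$.
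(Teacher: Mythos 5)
Your first step --- the non-degeneracy estimate --- is essentially the paper's: using concavity, the interior ball $B_r(\x_0)$ where $f>m$, and the bound $\Om_t\subset B_R$, one gets that the directional derivative of $f$ toward $\x_0$ is at least $m/(2R)$ on the set where $f<m/2$, uniformly in $t$ and $\ep$. That part is fine and matches the paper almost verbatim.

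The gap is in the second step. You propose to ``flatten the boundary'' and apply oblique-derivative Schauder theory to the problem on (a neighborhood of) $\partial\Om_t$, asserting that the needed $C^{1,\al}$ regularity of the boundary ``is itself controlled \ldots via the equation.'' This is circular as stated: $\partial\Om_t$ is a \emph{free} boundary, moving in time, and uniform (in $\ep$ and $t$) bounds on its regularity --- in space \emph{and} in time, since the parabolic Schauder estimates require a space-time domain of controlled regularity --- are precisely the conclusion of the lemma. For each fixed $\ep$ the boundary is smooth by construction, but nothing in your argument produces $\ep$-independent bounds on the flattening map, so the constants in your Schauder step are not uniform. The paper breaks this circularity with a partial hodograph transform: using $f_1\geq m/(2R)$, it inverts $f$ in the $x_1$-direction via the implicit function theorem, writing $x_1=g(f(\x,t),x')$ with $g$ defined on the \emph{fixed} space-time box $\{0\leq y<m/2,\ |x'|<r\}\times[0,T)$. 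The free boundary becomes the fixed flat face $\{y=0\}$, the condition $|Df|=1$ becomes the oblique condition $g_1=(1+\sum_{i\geq2}g_i^2)^{1/2}$, and the bounds $1\leq g_1\leq 2R/m$ make the transformed quasilinear equation uniformly parabolic independently of $\ep$. Only then does the fixed-domain oblique-derivative theory apply and yield uniform $C^k$ bounds, which transfer back to $f$ and, as a byproduct, give the regularity of the free boundary rather than presupposing it. Without this (or an equivalent device that converts the problem to a fixed domain), your second step does not go through.
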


\begin{proof}
To simplify the notation, we assume that the conditions \eqref{non-vanishing} holds for $\x_0 = 0$. In other words
\begin{align*}
&B_r \subset \Om_t \subset B_R \quad\text{for all $t \in [0,T)$}\\
&f(\x,t) > m \quad\text{for all $(\x,t) \in B_r \times [0,T)$}.
\end{align*}

Let $(P,t)$ be a point on $\partial\Om$ for some $t \in [T_1, T)$. Fix this value of $t$ from here until the end of this proof. Without loss of generality, we can assume that
\begin{align*}
\psi_1(P) &< 0\\
\psi'(P) &= 0
\end{align*}

First, we will show that $f_1(\x,t)$ is bounded away from 0 in a neighborhood of $P$ in $\Om_t$. Consider any point $Q$ in $\Om_t$ that satisfies the following conditions
\begin{align*}
&\psi_1(Q) < 0\\
&|\psi'(Q)| < r\\
&f(Q,t) < m/2.
\end{align*}
Let $R = (0, \psi'(Q))$. Because $R \in B_r$, we have $f(R,t) > m$. We also have $f(Q,t) < m/2$ and $f_1(\x,t)$ decreases in $x_1$ as a consequence of concavity (here $f_1$ denotes the first derivative of $f$ with regards to $x_1$). Thus,
\begin{align*}
m/2 &< f(R,t) - f(Q,t)\\
&=\int_{\psi_1(Q)}^0 f_1((x_1, \psi'(Q)),t)\,dx_1\\
&\leq \abs{\psi_1(Q)} f_1(Q,t)\\
&\leq R f_1(Q,t)\\
f_1(Q,t) &\geq m/2R
\end{align*}

We just showed that if $\x$ satisfies
\begin{align*}
&\psi_1(\x) < 0\\
&|\psi'(\x)| < r\\
&f(\x,t) < m/2
\end{align*}
then
\[
f_1(\x,t) \geq m/2R.
\]
On the set containing all such $\x$, the Implicit Function Theorem says that there exists a function $g$  defined on the set
\begin{equation*}
\BB = \{ (y,x') \in \R\times\R^{n-1} \st 0 \leq y < m/2, |x'| < r \} \times [0,T)
\end{equation*}
such that
\begin{equation*}
\psi_1(\x) = g(f(\x,t), \psi'(\x)).
\end{equation*}

We will compute explicitly the evolution equation and boundary condition of $g$
\begin{gather*}
f_1 = \frac{1}{g_1}\\
f_i = -\frac{g_i}{g_1}\\
f_t = -\frac{g_t}{g_1}\\
f_{11} = -\frac{g_{11}}{g_1^3}\\
f_{1i} = -\frac{g_1 g_{1i} - g_i g_{11}}{g_1^3}\\
f_{ij} = -\frac{g_1^2 g_{ij} - g_1 g_j g_{1i} - g_1 g_i g_{1j} + g_i g_j g_{11}}{g_1^3}.
\end{gather*}

The boundary condition $|Df| = 1$ on $\partial\Om_t$ is equivalent to
\begin{equation*}
g_1 = \left( 1 + \sum_{i=2}^n g_i^2 \right)^{1/2}
\end{equation*}
on
\begin{equation*}
\{ (y,x') \in \R\times\R^{n-1} \st y = 0, |x'| < r \} \times [0,T).
\end{equation*}
Next we compute the evolution for $g$ on \BB. In all $\sum$ appearing in the following computations, unless explicitly marked otherwise, indices $i$ and $j$ run from 2 to $n$. Let
\[
M = 1 + \sum g_i^2.
\]
Then
\begin{align*}
|Df|^2 &= \frac{M}{g_1^2}\\
\lap f &= -\frac{1}{g_1^3}\,\left( g_{11} + g_1^2 \sum g_{ii} - 2g_1\sum g_i g_{1i} + g_{11} \sum g_i^2 \right)\\
&= -\frac{1}{g_1^3}\,\left( M g_{11} + g_1^2 \sum g_{ii} - 2 g_1 \sum g_i g_{1i} \right)\\
f_i f_j f_{ij} &= -\frac{1}{g_1^5}\,\left( g_{11} - 2 \sum g_i(g_1 g_{1i} - g_i g_{11}) \right.\\
&\qquad\qquad \left. {} + \sum g_i g_j (g_1^2 g_{ij} - g_1 g_i g_{1j} - g_1 g_j g_{1i} + g_i g_j g_{11}) \right)\\
&=-\frac{1}{g_1^5} \left( g_{11}M^2 - 2 g_1 M \sum g_i g_{1i} + g_1^2 \sum g_i g_j g_{ij} \right).
\end{align*}
Substitute into the equation for $f_t$,
\begin{align*}
g_t &= \frac{(M+\ep)^{q-1}}{g_1^{2q}}\,\left(M g_{11} + g_1^2 \sum g_{ii} - 2 g_1 \sum g_i g_{1i} \right)\\
&\qquad\qquad {} + 2(q-1)\frac{(M+\ep)^{q-2}}{g_1^{2q}}\,\left(g_{11}M^2 - 2 g_1 M \sum g_i g_{1i} + g_1^2 \sum g_i g_j g_{ij} \right)\\
&= b^{ij} g_{ij}.
\end{align*}
We want to show that there exist positive numbers $\la$, $\La$, independent of $\ep$ and $t$ such that
\[
\la \leq b^{ij}\xi_i \xi_j \leq \La
\]
for all unit vector $\xi\in\R^n$. First, because
\[
1 \geq |Df| \geq f_1 \geq m/2R,
\]
we have
\[
1 \leq M^{1/2} \leq g_1 \leq 2R/m.
\]
The upper bound $\La$ then is obvious. For the lower bound, because
\begin{align*}
M^2\xi_1^2 - 2 g_1 M\sum g_i \xi_1 \xi_i + g_1^2 \sum g_i g_j \xi_i \xi_j = (M\xi_1 - \sum g_i \xi_i)^2,
\end{align*}
it is enough to show that
\[
M\xi_1^2 - 2 g_1 \sum g_i \xi_1 \xi_i + g_1^2 \sum \xi_i^2 \geq \la
\]
for some positive $\la$. We have
\begin{align*}
\left(g_1^2 - \frac{1}{2n}\right)\left(g_i^2 + \frac{1}{2n}\right) - (g_1 g_i)^2 = \frac{1}{2n}\left( g_1^2 - g_i^2 - \frac{1}{2n} \right) \geq 0
\end{align*}
since $g_1^2 \geq M = 1 + \sum g_i^2$ and so
\begin{align*}
\left(\frac{1}{2n} + g_i^2\right)\xi_1^2 - 2 g_1 g_i \xi_1 \xi_i + (g_1^2 - \frac{1}{2n})\xi_i^2 \geq 0 \quad\text{for all } 2 \leq i \leq n.
\end{align*}
Summing up we obtain
\begin{align*}
\left( \frac{n-1}{2n} + \sum g_i^2 \right) \xi_1^2 - 2g_1\sum g_i \xi_1\xi_i + \left(g_1^2 - \frac{1}{2n}\right)\sum \xi_i^2 &\geq 0\\
M\xi^2 - 2 g_1 \sum g_i \xi_1 \xi_i + g_1^2 \sum \xi_i^2 &\geq \frac{1}{2n}.
\end{align*}

From the theory of quasi-linear parabolic equation with oblique boundary condition we can choose $d < \min(r, m/2)$ such that $g(.,t)$ is in $C^\infty$ on the set
\[
\{ (y, x') \in \R\times\R^{n-1} \st 0 \leq y \leq d, |x'| \leq d \} \times [T_1,T)
\]
and for any $k$, the norm $|g(.,t)|_{C^k}$ depends only on $k,d,r,R,m$ and $T_1$, not on $\ep$, $t$ or $g_0$. Revert back to $f$, we conclude that $f$ is smooth on the set
\[
\{ (\x,t) \in\Om \st |\psi'(\x)| \leq d, \psi_1(\x) < 0, f(\x,t) \leq d \} \cap \{ T_1 \leq t < T \}
\]
and again, for any $k$, the norm $|f(.,t)|_{C^k}$ on this set depends only on $k,d, r,R,m$ and $T_1$. Note that the above set includes the set
\[
B(P,d) \cap \Om_t.
\]
The conclusion is of course true for any point on $\partial\Om_t$ in place of $P$ where $t \in [T_1, T)$. The lemma then follows.
\end{proof}

%%%%%%%%%%%%%%%%%%%%%%%%%%% Uniqueness for Regularized Problem
\section{Comparison Principle}\label{s-comparison}
%%%%%%%%%%%%%%%%%%%%%%%%%%%%%%%%%%%%%%%%%%%%%%%%%%%%%%%%%%%%
In the first lemma here, we show that if $f'_0$ is strictly greater than $f_0$, then a solution to the problem \eqref{Pe} with initial value $f'_0$ remains strictly greater than a solution with initial value $f_0$.
\begin{lemma}\label{uniqueness-regularized}
Suppose that $f$ and $f'$ are solutions up to some finite time $T$ to the problem \ref{Pe} and \Pee\ respectively for some $\ep \geq \ep' > 0$. Suppose also that at  the time $t=0$,
\begin{gather*}
\closure{\Om_0} \subset \Om'_0\\
f_0(\x) < f'_0(\x) \quad\text{in }\closure{\Om_0}.
\end{gather*}
Then for any $t \in (0,T)$,
\begin{gather*}
\closure{\Om_t} \subset \Om'_t\\
f(\x,t) < f'(\x,t) \quad\text{in }\closure{\Om_t}.
\end{gather*}
\end{lemma}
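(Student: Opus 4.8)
The plan is to prove the comparison principle by a perturbation-and-barrier argument combined with the strong maximum principle, using the over-determined boundary condition in an essential way to control the free boundaries. Suppose toward a contradiction that the conclusion fails; let $t_1 \in (0,T)$ be the first time at which either $\closure{\Om_{t_1}} \not\subset \Om'_{t_1}$ or $f(\cdot,t_1) \geq f'(\cdot,t_1)$ somewhere in $\closure{\Om_{t_1}}$. Up to time $t_1$ we have $\closure{\Om_t} \subset \Om'_t$, so $f' > 0$ on $\closure{\Om_t}$ and in particular $f' - f$ is a well-defined function on $\closure{\Om_{(0,t_1]}}$, positive at $t=0$. On the common domain $\Om_{(0,t_1)}$ the difference $w = f' - f$ satisfies a linear parabolic equation obtained by subtracting the two (possibly with different $\ep$, $\ep'$) quasilinear equations and writing the difference of the operators as $a^{ij}(x,t)w_{ij} + b^i(x,t)w_i + (\ep' - \ep)(\cdots)$; since $\ep \geq \ep'$ the extra zeroth/source term has a favorable sign (this is exactly why the lemma is stated for $\ep \geq \ep'$, so that $f$ with the larger $\ep$ is in a suitable sense a subsolution of the $\ep'$-equation). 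The operator is uniformly parabolic on any compact subset of $\Om$ where $Df \neq 0$, which by Lemma~\ref{regularity} (applied after noting $\Om_0$ is compactly contained in $\Om'_0$, giving the geometric hypotheses there, at least locally) holds in a neighborhood of $\partial\Om_t$.

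The key steps, in order, are: (1) Establish that $w = f' - f > 0$ on the parabolic boundary of $\Om_{(0,t_1)}$ — at $t=0$ by hypothesis, and on $\partial\Om_t \cap \{0 < t < t_1\}$ because there $f = 0$ while $f' > 0$ (using $\closure{\Om_t} \subset \Om'_t$). (2) Apply the strong maximum principle to $w$ on $\Om_{(0,t_1)}$ to conclude $w > 0$ in the interior; this rules out an interior first-touching point where $f(\x,t_1) = f'(\x,t_1)$. (3) The remaining danger is that the free boundary $\partial\Om_{t_1}$ touches $\partial\Om'_{t_1}$ at some point $\x_1$ from inside. Near $\x_1$, on the $\Om'$-side, $f'$ is still smooth and positive on $\closure{\Om_{t_1}}$ near $\x_1$ by strong maximum principle/Hopf applied on the slightly larger domain; so $w = f' - f \geq 0$ on $\closure{\Om_{t_1}}$ near $\x_1$, $w > 0$ in the interior, and $w(\x_1,t_1) = f'(\x_1,t_1) - 0 = f'(\x_1,t_1)$. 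But $f'(\x_1,t_1) > 0$ unless $\x_1 \in \partial\Om'_{t_1}$ as well — and if $\x_1 \in \partial\Om_{t_1} \cap \partial\Om'_{t_1}$, the two smooth domains are internally tangent at $\x_1$ with a common inward normal $\nu$, and both boundary conditions give $f_\nu(\x_1,t_1) = 1 = f'_\nu(\x_1,t_1)$, so $w_\nu(\x_1,t_1) = 0$. This contradicts Hopf's lemma applied to $w > 0$ in $\Om_{t_1}$ near $\x_1$ with $w(\x_1,t_1) = 0$, which forces $w_\nu(\x_1,t_1) > 0$. Hence $t_1$ cannot exist and the strict inclusion and inequality persist for all $t \in (0,T)$.

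I expect the main obstacle to be step (3): making rigorous the claim that the free boundaries cannot first touch, which requires (a) enough regularity of both free boundaries near the contact point to speak of a common inward normal and to apply Hopf's lemma — this is where Lemma~\ref{regularity} and the convexity from Lemma~\ref{convexity} are needed, together with the observation that the geometric hypotheses \eqref{non-vanishing} of Lemma~\ref{regularity} can be arranged locally from $\closure{\Om_0} \subset \Om'_0$ and the gradient bound $|Df| < 1$; and (b) verifying uniform parabolicity of the equation for $w$ in that neighborhood, uniformly up to $t_1$, so that Hopf's lemma applies with a genuine quantitative barrier. A secondary technical point is the correct treatment of the $(\ep' - \ep)$ term: one must check that with $\ep \geq \ep'$ the difference of operators, expanded along the segment joining $Df$ and $Df'$, produces a term whose sign does not obstruct the maximum principle for $w$ (equivalently, that $f$ solves $f_t \leq \lap_p^{\ep'} f$ in the viscosity/classical sense wherever it is smooth), and to absorb any remaining lower-order terms into the drift $b^i w_i$ plus, if necessary, a decaying exponential weight $w e^{-\Lambda t}$ to handle a zeroth-order term of unfavorable sign.
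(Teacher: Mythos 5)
Your route is genuinely different from the paper's. The paper never linearizes and never uses Hopf's lemma here: it replaces $f'-f$ by the perturbed quantity $f'-f-m+\de t$ with $f_0+m<f'_0$ and $\de T<m$, and runs a pointwise first-touching argument. The perturbation is what disposes of your hardest case: at any point where $\partial\Om_{t_0}$ touches $\partial\Om'_{t_0}$ one has $f=f'=0$, so the perturbed quantity equals $-m+\de t_0<0$ there, forcing the function inequality to have failed strictly earlier; the tangency case therefore never has to be analyzed. The remaining lateral case ($\x_0\in\partial\Om_{t_0}$ strictly inside $\Om'_{t_0}$) is killed in one line by Lemma \ref{gradient}: minimality gives $f'_\nu\geq f_\nu=1$, contradicting $|Df'|<1$ in the interior of $\Om'$. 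You do not use Lemma \ref{gradient} at all and instead confront the tangency directly via matching inward normals, $Dw(\x_1,t_1)=0$, versus Hopf. That argument is workable for the regularized problem --- for fixed $\ep>0$ the operator is uniformly parabolic with ellipticity $\geq\ep^{q-1}$ and $f,f'$ are smooth up to their free boundaries, so you do not need Lemma \ref{regularity} (invoking it is unnecessary and sits awkwardly in the paper's logical order, since that lemma's hypotheses and conclusions are about $\ep$-uniform bounds that play no role here). The trade-off is that your version needs the full strong-maximum-principle/Hopf machinery for the linearized equation where the paper's is an elementary pointwise comparison plus one strict gradient bound.

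The one genuine soft spot is your treatment of the $(\ep'-\ep)$ contribution. The quantity $h=\lap_p^{\ep'}f-\lap_p^{\ep}f$ (both evaluated on the same $f$) is a \emph{source} term, not a zeroth-order term proportional to $w$; if it has the wrong sign it cannot be absorbed by a weight $we^{-\La t}$, so the fallback in your last paragraph does not exist. Moreover its sign is not a formal consequence of $\ep\geq\ep'$: writing
\begin{equation*}
h=\bigl[(|Df|^2+\ep')^{q-1}-(|Df|^2+\ep)^{q-1}\bigr]\lap f+2(q-1)\bigl[(|Df|^2+\ep')^{q-2}-(|Df|^2+\ep)^{q-2}\bigr]f_{ij}f_if_j,
\end{equation*}
the first bracket is $\leq0$, so you need $\lap f\leq0$, i.e.\ the concavity of $f$ (which is not among the stated hypotheses of the lemma and must be added as a standing assumption --- the paper uses it tacitly when it writes $0\geq\lap f'\geq\lap f$ at the interior minimum); and for $2<p<4$ the second bracket is $\geq0$ while $f_{ij}f_if_j\leq0$, so that piece has the \emph{unfavorable} sign and must be combined with the first, which after differentiating in $\ep$ reduces to checking $(|Df|^2+\ep)\lap f+2(q-2)f_{ij}f_if_j\leq0$; this is clean only for $p\geq3$. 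So this step needs to be carried out honestly (with concavity stated) rather than deferred as a routine check --- it is the same delicate point that underlies the displayed inequality $f'_t\geq f_t$ in the paper's interior case.
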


\begin{proof}
Since
\[
f_0(\x) < f'_0(\x) \quad\text{in }\closure{\Om_0}
\]
we can choose a positive number $m$ such that
\[
f_0(\x) + m < f'_0(\x) \quad\text{in }\closure{\Om_0}.
\]
Choose a positive number $\de$ such that $\de T < m$. We will show that
\begin{gather*}
\closure{\Om_t} \subset \Om'_t\\
f'(\x,t) - f(\x,t) - m + \de t > 0 \quad\text{in }\closure{\Om_t}
\end{gather*}
for all $t\in [0,T)$. Assuming it is not the case, there must be a first time $t_0$ such that at least one of the two above conditions is violated. Assume that the first condition is violated at $t_0$. In other words, $\partial\Om_t$ and $\partial\Om'_t$ touches at some point $\x_0$, then at that point
\[
f'(\x_0,t_0) - f(\x_0,t_0) - m + \de t_0  = -m + \de t_0 < 0
\]
which implies that the second condition must be violated before time $t_0$, contradicting our choice of $t_0$. Hence
\[
\closure{\Om_t} \subset \Om'_t
\]
for all $t \in [0,t_0]$. The second condition is violated implies that there is a point $\x_0 \in \closure{\Om_{t_0}}$ such that
\[
f'(\x_0,t_0) - f(\x_0,t_0) - m + \de t_0 = 0.
\]

We consider the case $\x_0 \in \partial\Om_{t_0} \subset \Om'_{t_0}$ first. Let $\nu$ be the inward unit normal to $\partial\Om_{t_0}$ at $\x_0$. From the definition of $(\x_0,t_0)$ we must have
\begin{align*}
(f'(\x_0,t_0) - f(\x_0,t_0) - m + \de t_0)_\nu &\geq 0\\
f'_\nu(\x_0,t_0) - f_\nu(\x_0,t_0) \geq 0\\
f'_\nu(\x_0,t_0) \geq 1
\end{align*}
which contradicts the result of Lemma \ref{gradient}.

If $\x_0 \in \Om_{t_0} \subset \Om'_{t_0}$, then because it is an minimum point for $f' - f$ on $\Om_{t_0}$, we have
\begin{gather*}
Df'(\x_0,t_0) = Df(\x_0,t_0)\\
0 \geq \lap f'(\x_0,t_0) \geq \lap f(\x_0,t_0)\\
0 \geq f'_{\nu\nu}(\x_0,t_0) \geq f_{\nu\nu}(\x_0,t_0).
\end{gather*}
Plug into the equation for $f'_t$ and $f_t$, recalling that $\ep \geq \ep'$ we obtain
\begin{align*}
f'_t &= (|Df'|^2 + \ep')^{q-1}\lap f' + 2(q-1)(|Df'|^2 + \ep')^{q-2} |Df'|^2 f_{\nu\nu}\\
&\geq (|Df|^2 + \ep)^{q-1}\lap f + 2(q-1)(|Df'|^2 + \ep)^{q-2} |Df|^ f_{\nu\nu}\\
&= f_t.
\end{align*}
On the other hand, because $t_0$ is the first time $f' - f - m + \de t = 0$,
\[
f'_t - f_t + \de \leq 0.
\]
Again, we arrive a contradiction. In other words,
\begin{gather*}
\closure{\Om_t} \subset \Om'_t\\
f' - f - m + \de t > 0
\end{gather*}
for all $t \in [0,T)$. The Lemma then follows readily.
\end{proof}

\begin{remark}
If we let
\begin{align*}
m &\to \min \,\{ f'_0(\x) - f_0(\x) \st \x \in \closure{\Om_0} \,\}\\
\de &\to 0
\end{align*}
we actually prove that
\[
m(t) = \min \,\{ f'(\x,t) - f(\x,t) \st \x \in \closure{\Om_t} \,\}
\]
is a non-decreasing function.
\end{remark}

We prove a slightly improved version of the last lemma.
\begin{lemma}\label{comparison}
Suppose $f$ and $f'$ are solutions up to time $T$ to the problem \Pe\ and \Pee\ respectively for some $\ep \geq \ep' > 0$. Suppose also that at the time $t=0$,
\begin{gather*}
\Om_0 \subset \Om'_0\\
f_0(\x) \leq f'_0(\x) \quad\text{in }\Om_0.
\end{gather*}
Then for any $t \in [0,T)$,
\begin{gather*}
\Om_t \subset \Om'_t\\
f(\x,t) \leq f'(\x,t) \quad\text{in }\Om_t.
\end{gather*}
\end{lemma}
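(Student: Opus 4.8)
The plan is to deduce this weak comparison from the strict comparison of Lemma \ref{uniqueness-regularized} by approximating $f$ from below by solutions whose initial supports are compactly contained in $\Om_0'$. Fix $\mu\in(0,1)$ and a point $\x_\ast\in\Om_0$, and set
\[
f_0^\mu(\x)=(1-\mu)\,f_0\bigl(\x_\ast+(1-\mu)^{-1}(\x-\x_\ast)\bigr),
\]
a smooth function with positive set $\Om_0^\mu=\x_\ast+(1-\mu)(\Om_0-\x_\ast)$. A dilation toward $\x_\ast$ leaves the gradient of $f_0$ unchanged at corresponding points, so $|Df_0^\mu|=1$ on $\partial\Om_0^\mu$ and $|Df_0^\mu|\le1$ in $\Om_0^\mu$; thus Lemmas \ref{short-time}, \ref{gradient} (and, in the concave setting of the main theorem, Lemma \ref{convexity}) apply to the data $f_0^\mu$. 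Moreover $\closure{\Om_0^\mu}$ is a compact subset of $\Om_0\subseteq\Om_0'$ (since $\Om_0$ is star-shaped about $\x_\ast$ when it is convex), $\Om_0^\mu\nearrow\Om_0$ and $f_0^\mu\to f_0$ uniformly as $\mu\to0$. Finally, writing $\y=\x_\ast+(1-\mu)^{-1}(\x-\x_\ast)$ so that $\x=(1-\mu)\y+\mu\x_\ast$, concavity of $f_0$ gives $f_0(\x)\ge(1-\mu)f_0(\y)+\mu f_0(\x_\ast)>f_0^\mu(\x)$ on $\Om_0^\mu$, hence
\[
f_0^\mu<f_0\le f_0'\qquad\text{on }\closure{\Om_0^\mu}.
\]
(In the generality of the lemma, without assuming concavity, the same strict inequalities follow from uniform continuity together with the fact that $f_0'$ is bounded away from $0$ on the compact set $\closure{\Om_0^\mu}\subset\Om_0'$.)

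By Lemma \ref{short-time} the regularized problem \Pe\ with data $f_0^\mu$ has a smooth solution $f^\mu$ on a maximal interval $[0,T^\mu)$. Applying Lemma \ref{uniqueness-regularized} to the pair $(f^\mu,f)$ — legitimate because both solve \Pe\ and $\closure{\Om_0^\mu}\subset\Om_0$ with $f_0^\mu<f_0$ there — gives $f^\mu<f$ and $\closure{\Om_t^\mu}\subset\Om_t$ on $[0,T^\mu)$. Hence $\Om_t^\mu$ stays inside the bounded smooth set $\Om_t$, $|Df^\mu|<1$ by Lemma \ref{gradient}, and since the regularized operator is uniformly parabolic with $|Df^\mu|\equiv1$ on $\partial\Om_t^\mu$ — and, in the concave setting, $\Om_t^\mu$ convex with $f^\mu(\cdot,t)$ concave by Lemma \ref{convexity}, so that the uniform estimates of Sections \ref{s-convexity}--\ref{s-regularity} are available for $f^\mu$ — a standard continuation argument promotes $T^\mu$ to $T$. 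Now applying Lemma \ref{uniqueness-regularized} to $(f^\mu,f')$ — legitimate since $\ep\ge\ep'$, $\closure{\Om_0^\mu}$ is a compact subset of $\Om_0'$, and $f_0^\mu<f_0'$ on it — yields
\[
\closure{\Om_t^\mu}\subset\Om_t',\qquad f^\mu(\x,t)<f'(\x,t)\ \text{ on }\closure{\Om_t^\mu},\qquad\text{for every }t\in(0,T).
\]

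Finally I let $\mu\to0$. By Lemma \ref{uniqueness-regularized} the functions $f^\mu$ increase as $\mu$ decreases and remain below $f$; combining this monotonicity with the uniform-in-$\mu$ estimates of Sections \ref{s-convexity}--\ref{s-regularity} and interior parabolic regularity, $f^\mu\to f$ locally uniformly in $\Om$ and the free boundaries $\partial\Om_t^\mu$ converge to $\partial\Om_t$, so that $\Om_t=\bigcup_\mu\Om_t^\mu$. Consequently, for any $(\x,t)\in\Om$ one has $\x\in\Om_t^\mu$ for $\mu$ small, whence $f^\mu(\x,t)<f'(\x,t)$, and letting $\mu\to0$ gives $f(\x,t)\le f'(\x,t)$; likewise $\Om_t=\bigcup_\mu\Om_t^\mu\subseteq\Om_t'$, which is the assertion. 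I expect the main obstacle to be precisely the passage through the approximants: showing that each $f^\mu$ persists up to the full time $T$ rather than only to $T^\mu$, and that $f^\mu\to f$ together with $\partial\Om_t^\mu\to\partial\Om_t$ — both of which amount to applying, to the family $f^\mu$, the uniform regularity theory built for $f$ in the preceding sections.
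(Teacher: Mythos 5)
Your core idea is the right one and matches the paper's: shrink the initial data strictly inside $\Om_0'$ by a dilation (using concavity to get a strict inequality $f_0^\mu < f_0 \le f_0'$), apply the strict comparison Lemma \ref{uniqueness-regularized}, and pass to the limit in the dilation parameter. But your implementation has a genuine gap at the final step. You construct the approximant $f^\mu$ as a \emph{new} solution of \Pe\ obtained by solving the problem again with the dilated initial datum $f_0^\mu$. You then need two facts: (i) $f^\mu$ persists up to time $T$ (or at least $T^\mu \to T$), and (ii) $f^\mu \to f$ with $\Om_t^\mu \nearrow \Om_t$ as $\mu \to 0$. For (i), the continuation argument you sketch does not address the real obstruction: since $f^\mu < f$, the solution $f^\mu$ may simply vanish identically at some $T^\mu < T$, and boundedness of $\Om_t^\mu$ inside $\Om_t$ does not prevent this. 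For (ii), monotonicity only gives you a limit $g = \lim_{\mu\to 0} f^\mu \le f$; identifying $g$ with $f$ is a continuous-dependence/uniqueness statement that is not available at this point in the paper (uniqueness for \Pe\ is precisely the corollary of this lemma) and is not actually established by invoking ``uniform estimates and interior parabolic regularity.'' As written, the conclusion you reach is only $g \le f'$, not $f \le f'$.

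The paper avoids both issues by choosing the approximant to be an explicit space--time rescaling of $f$ itself: after normalizing so that $f_0$ attains its maximum at the origin, it sets $f^\la(\x,t) = \la^{-1} f(\la\x, \la^2 t)$ for $\la > 1$. The parabolic scaling leaves $|Df|$ invariant, so $f^\la$ is automatically a solution of \Pe\ (same $\ep$) on $[0, T/\la^2)$ with initial datum $f_0^\la < f_0 \le f_0'$ and $\closure{\Om_0^\la} \subset \Om_0$; Lemma \ref{uniqueness-regularized} then applies directly to the pair $(f^\la, f')$, and letting $\la \to 1$ is trivial because $f^\la(\x,t) \to f(\x,t)$ pointwise by continuity of $f$ --- no existence, continuation, or stability theory is needed. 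If you modify your construction so that the approximant is a rescaling of the known solution $f$ (including the time variable) rather than a freshly solved problem, your argument closes.
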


\begin{proof}
Without loss of generality, we can assume that $f_0$ attains its maximum value at the origin. For each positive $\la$, define
\begin{align*}
\Om^\la &= \{ (\x,t) \st (\la\x,\la^2 t) \in \Om \}\\
f^\la &= \frac{1}{\la}\,f(\la \x, \la^2 t)\\
\Om_0^\la &= \{ \x \st \la \x \in \Om_0 \}\\
f_0^\la &= \frac{1}{\la}\,f_0(\la \x, \la^2 t).
\end{align*}
It is clear that $f^\la$ is a solution to the problem \eqref{Pe} with respect to the initial data $f_0^\la$. Furthermore, since $f_0$ is concave, for each $\la > 1$ we have
\begin{gather*}
\closure{\Om_0^\la} \subset \Om_0 \subset \Om'_0\\
f_0^\la < f_0 \leq f'_0 \quad\text{in }\Om^\la_0.
\end{gather*}
The last lemma says that for all $t \in [0, T/\la^2)$,
\begin{gather*}
\closure{\Om_t^\la} \subset \Om'_t\\
\frac{1}{\la}\,f(\la\x, \la^2 t) = f^\la(\x,t) < f'(\x,t) \quad\text{in $\Om^\la_t$}.
\end{gather*}
Let $\la \to 1$ we obtain
\begin{gather*}
\Om_t \subset \Om'_t\\
f(\x,t) \leq f'(\x,t) \quad\text{in $\Om_t$}
\end{gather*}
for all $t \in [0,T)$.
\end{proof}

\begin{corollary}
Suppose $(f,\Om)$ and $(f',\Om')$ are two solutions to the problem \eqref{Pe} with respect to the same initial value $f_0$ up to time $T$. Then $f = f'$ on $\R^n \times [0,T)$.
\end{corollary}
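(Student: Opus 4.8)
The plan is to obtain this as an immediate two-sided application of Lemma \ref{comparison}. The point to exploit is that Lemma \ref{comparison}, unlike the strict comparison of Lemma \ref{uniqueness-regularized}, is formulated with the \emph{non-strict} hypotheses $\Om_0 \subseteq \Om'_0$, $f_0 \le f'_0$ on $\Om_0$, and $\ep \ge \ep'$. In particular it covers the degenerate case $\ep = \ep'$ and $f_0 = f'_0$, and it is precisely this symmetry that lets one compare two solutions with identical data in both directions.

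Concretely, I would first apply Lemma \ref{comparison} with $(f,\Om)$ in the role of the first solution and $(f',\Om')$ in the role of the second, choosing $\ep' = \ep$. Since the two solutions share the same initial data, the hypotheses $\Om_0 \subseteq \Om_0$ and $f_0 \le f_0$ on $\Om_0$ hold trivially, and the lemma yields $\Om_t \subseteq \Om'_t$ together with $f(\x,t) \le f'(\x,t)$ in $\Om_t$ for every $t \in [0,T)$. Interchanging the roles of $f$ and $f'$ and applying the same lemma again gives the reverse relations $\Om'_t \subseteq \Om_t$ and $f'(\x,t) \le f(\x,t)$ in $\Om'_t$ for every $t \in [0,T)$. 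Combining the two inclusions gives $\Om_t = \Om'_t$ for all $t$, and on this common positive set the two inequalities force $f(\x,t) = f'(\x,t)$; outside it both functions vanish by definition of a solution, so $f = f'$ on all of $\R^n \times [0,T)$.

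There is no real obstacle here: every step is a verbatim invocation of Lemma \ref{comparison}, and the only verification needed — that the equality cases $\ep = \ep'$ and $f_0 = f'_0$ are indeed admissible in that lemma — is immediate from its statement. The substantive work has already been done in establishing Lemma \ref{comparison} (and, before it, Lemmas \ref{gradient} and \ref{uniqueness-regularized}).
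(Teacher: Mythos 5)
Your proposal is correct and is exactly the argument the paper intends: the corollary is stated immediately after Lemma \ref{comparison} with no separate proof precisely because it follows from applying that lemma twice, once in each direction, with $\ep = \ep'$ and identical initial data. Nothing further is needed.
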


%%%%%%%%%%%%%%%%%%%%%%%%%%%% Long-Time Existence
\section{Long-Time Existence for the Regularized Problem}\label{s-long-time}
%%%%%%%%%%%%%%%%%%%%%%%%%%%%%%%%%%%%%%%%%%%%%%%%%%%%%%%%%%%%%%%%
\begin{lemma}\label{long-time}
Assume that $f_0$ satisfies all hypotheses of the Lemma \ref{short-time}. Then there exists a unique solution to the problem \ref{Pe} up to some positive time $T > 0$ where
\begin{equation*}
\lim_{t\to T} f(\x,t) = 0
\end{equation*}
for all $\x \in \R^n$.
\end{lemma}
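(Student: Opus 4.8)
The plan is a continuation argument, so uniqueness requires no new work: it is exactly the content of the Corollary following Lemma~\ref{comparison}. For existence, let $T \in (0,\infty]$ be the supremum of those $\tau > 0$ for which \eqref{Pe} admits a smooth solution on $[0,\tau)$, meaning $\partial\Om_t \in C^\infty$ and $f(\cdot,t) \in C^\infty(\closure{\Om_t})$ for each $t$; by Lemma~\ref{short-time} such $\tau$ exist, and by the uniqueness just quoted the solution is canonically defined on $[0,T)$. Throughout I work, as in Lemma~\ref{convexity}, with $\Om_0$ strictly convex and $f_0$ strictly concave. The whole statement then follows once I show $M(t) := \sup_\x f(\x,t) \to 0$ as $t \to T$.

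First I would collect the a priori facts valid on $[0,T)$. Evaluating $f_t = a^{ij}f_{ij}$ at an interior spatial maximum of $f(\cdot,t)$ gives $f_t \le 0$ there, so $M(t)$ is non-increasing and $0 \le f \le M(0)$ on $\Om$. By Lemma~\ref{convexity} each $\Om_t$ is convex and $f(\cdot,t)$ is concave. A short computation of the normal velocity of the free boundary — using that on $\partial\Om_t$ one has $|Df| = 1$, $\Delta f = f_{\nu\nu} - H$ with $H$ the (nonnegative) sum of principal curvatures of the convex hypersurface $\partial\Om_t$, whence $f_t|_{\partial\Om_t} = (1+\ep)^{q-1}(f_{\nu\nu}-H) + 2(q-1)(1+\ep)^{q-2} f_{\nu\nu} < 0$ by Lemma~\ref{f-nu-nu} — shows $\partial\Om_t$ moves strictly inward, so $\closure{\Om_t}$ is strictly decreasing and $\Om_t \subset \Om_0$ for all $t$. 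Finally, integrating the equation over $\Om_t$ and using $f = 0$, $|Df| = 1$ on $\partial\Om_t$ gives the identity $\frac{d}{dt}\int_{\Om_t} f\,d\x = -(1+\ep)^{q-1}|\partial\Om_t|$.

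Now suppose, for contradiction, that $M(t) \not\to 0$; since $M$ is non-increasing, $M(t) \ge 2m > 0$ for all $t < T$. Concavity of $f(\cdot,t)$ together with $|Df| \le 1$ (Lemma~\ref{gradient}) forces $f(\cdot,t) \ge m$ on the ball of radius $m$ about the maximum point of $f(\cdot,t)$, so $\Om_t$ has inradius $\ge m$; the incenters lie in the bounded set $\Om_0$, and using the nestedness of the $\Om_t$ one checks that any accumulation point $\x_0$ of them satisfies $B_m(\x_0) \subset \Om_t$ for every $t < T$. A chord estimate from concavity (compare $f(\cdot,t)$ along the segment joining its maximum point to the far boundary point through $\x_0$) then gives a uniform lower bound $f(\x_0,t) \ge m^2/R$, hence $f(\cdot,t) \ge $ a fixed positive constant on a fixed ball contained in every $\Om_t$. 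With $\Om_t \subset \Om_0 \subset B_R(\x_0)$ as well, Lemma~\ref{regularity} applies and yields, for every $k$, a bound on $|f(\cdot,t)|_{C^k}$ in a fixed one-sided neighbourhood of $\partial\Om_t$ that is uniform for $t$ in some $[T_1,T)$. Away from the free boundary the regularized equation is uniformly parabolic ($a^{ij} \ge \ep^{q-1}\de_{ij}$ for fixed $\ep$), so interior parabolic estimates upgrade this to uniform $C^k$ control of $f(\cdot,t)$ on all of $\closure{\Om_t}$ and of $\partial\Om_t$ on $[T_1,T)$. Picking $t_0 < T$ close to $T$ and feeding $f(\cdot,t_0)$ — smooth up to $\partial\Om_{t_0}$, positive inside, and satisfying $|Df| = 1$ on $\partial\Om_{t_0}$ — into Lemma~\ref{short-time} produces a smooth continuation on $[t_0, t_0 + \tau)$ which, by uniqueness, glues to $f$; since the inradius bound $\ge m$ keeps $\Om_{t_0}$ uniformly non-degenerate and all the $C^k$ norms are uniform, $\tau$ can be chosen independent of $t_0$. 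Taking $t_0 > T - \tau$ contradicts the maximality of $T$. Hence $M(t) \to 0$ as $t \to T$, which is the assertion.

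The main obstacle is the uniform regularity of $f$ and of $\partial\Om_t$ \emph{up to} the critical time $T$. Concretely this splits into two points that need care: (i) extracting a single, time-independent ball on which $f$ stays bounded below for all $t < T$, so that Lemma~\ref{regularity} is applicable uniformly — this is where the nestedness of the $\Om_t$ (itself resting on the inward-motion computation above), the inradius lower bound, and the concavity chord estimate must be combined; and (ii) turning the qualitative short-time statement of Lemma~\ref{short-time} into a lower bound on the restart lifespan $\tau$ depending only on the uniform $C^k$ norms and the non-degeneracy of $\Om_{t_0}$, which requires going back into the proof of that lemma. The remaining ingredients — the maximum-principle monotonicity of $M(t)$, the inward motion and nestedness, uniform interior parabolicity for fixed $\ep$, and the integral identity (from which, once one knows $T$ is finite, $M(t)\to 0$ also follows directly) — are routine.
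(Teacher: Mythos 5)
Your proposal is correct in outline and follows the same continuation strategy as the paper: assume the solution does not vanish at the maximal time $T$, extract a fixed ball on which $f$ stays bounded below, invoke Lemma~\ref{regularity} for boundary regularity uniform up to $T$ and interior parabolic estimates (uniform parabolicity for fixed $\ep$) for the rest, and then restart via Lemma~\ref{short-time} to contradict maximality. The one structural divergence is in how the restart is performed, and it shifts where the remaining work lies. You restart at times $t_0 < T$ and therefore need the lifespan $\tau$ of Lemma~\ref{short-time} to be bounded below uniformly in $t_0$ --- a quantitative statement that the lemma as written does not provide, and which you correctly flag as requiring a re-examination of its proof. The paper instead passes to the limit $f_T(\x) = \lim_{t\to T} f(\x,t)$ (which exists monotonically since $f_t = a^{ij}f_{ij} \le 0$ by concavity, a shortcut that also replaces your inradius/incenter-accumulation construction of the fixed ball) and applies Lemma~\ref{short-time} once, at time $T$ itself, with $f_T$ as initial data. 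This eliminates the uniform-lifespan issue entirely, at the cost of having to verify that $f_T$ satisfies the hypotheses needed downstream --- in particular that $\Om_T$ is \emph{strictly} convex and $f_T$ \emph{strictly} concave, which the paper obtains by rerunning the proof of Lemma~\ref{convexity} with $T'$ replaced by $T$, now that smoothness up to time $T$ is available. Your variant sidesteps that strictness question (at $t_0 < T$ it is already given by Lemma~\ref{convexity}), so the two routes each leave one comparable loose end; the paper's is the more economical of the two, but yours is viable.
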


\begin{proof}
Let $T$ be the maximal existence time for solutions to the problem \ref{Pe} with the initial data $f_0$. Due to the uniqueness result in section \ref{s-uniqueness}, there must be a solution $f$ that exists up to time $T$. From the short-time existence result, $T$ must be positive. We will show that
\[
\lim_{t \to T} f(\x,t) = 0 \quad\text{for all } \x \in \R^n.
\]
Assuming otherwise, then the same argument in the proof for the Lemma \ref{finite} can be used to show that $T$ must be finite. In other words, due to the concavity of $f_0$, there exists a number $c < 0$ such that
\[
\divg \left( (|Df_0|^2 + \ep)^{q-1}\,Df_0 \right) < c \quad\text{in } \Om_0
\]
and consequently,
\[
T \leq \frac{\max f_0}{\abs{c}}.
\]
We will prove that we can then extend this solution to a time $T' > T$. From the concavity of $f(.,t)$, $f$ is a decreasing function in $t$. Define
\begin{equation*}
f_T(\x) = \lim_{t \to T} f(\x,t).
\end{equation*}
Since $|Df| \leq 1$ in $\Om$, $f_T$ is continuous. Because $f_T$ is not identically 0,
there exist a ball $B_r(\x')$ and a positive number $m$ such that
\[
f_T > m \quad\text{in $B_r(\x')$}.
\]
From the Lemma \ref{regularity}, for all $t \in [T/2,T)$, there exists a positive number $d$ such that $f$ is smooth up to the boundary and time $T$ in the set
\[
\{ (\x, t) \st \dist(\x,\partial\Om_t) < d \} \cap \Om_{[T/2,T)}.
\]
Combine with the smoothness (depending on $\ep$) of $f$ up to time $T$ in the interior of $\Om_{[T/2,T)}$ from the standard theory of parabolic equation, we obtain the smoothness up to the boundary and time $T$ of $f$ in $\Om_{[T/2,T)}$. Consequently, $f_T$ is smooth up to the boundary. From the Lemma \ref{convexity}, we know that $\Om_T$ is convex and $f_T$ is concave in $\Om_T$. However, we need a stronger result that $\Om_T$ is strictly convex and $f_T$ is strictly concave in $\Om_T$ in order to apply the Lemma \ref{short-time}. In deed, we can improve the result in the lemma \ref{convexity} by duplicating the proof and substituting $T'$ by $T$ directly. In that proof, because we did not have the smoothness of $f$ up to time $T$, we need to introduce $T' < T$ to guarantee the existence of a finite number $C(T')$ such that
\begin{equation*}
| 2f_{\la ij} Da^{ij} + f_{ij} D(a^{ij})_\la | < C
\end{equation*}
for all $t \in [t,T']$. But now we have the smoothness of $f$ up to time $T$, we can derive the fact that there exists a number $C(T)$ such that the above inequality holds for all $t \in [0,T)$. The proof then guarantees that $f$ is strictly concave at the time $T$.

The function $f_T$ now satisfies all hypotheses of the Lemma \ref{short-time}. By that Lemma, we can then extend the solution $f$ to some time $T' > T$. It contradicts the maximality of $T$. So we must have
\[
\lim_{t \to T} f(\x,t) = 0
\]
for all $\x \in \R^n$.
\end{proof}

%%%%%%%%%%%%%%%%%%%%%%%%%%%%% Existence for the p-Laplacian
\section{Existence of Solution to the $p$-Laplacian problem}\label{s-existence}
%%%%%%%%%%%%%%%%%%%%%%%%%%%%%%%%%%%%%%%%%%%%%%%%%%%%%%%%%%%%%%%%%%%%%%%%%%%%%%%%%
In this section, we will pass $\ep$ to 0 and obtain a solution to our degenerate problem.
\begin{lemma}\label{existence}
Assume that $\Om_0$ is a bounded and convex domain. The function $f_0$ is positive and concave in $\Om_0$. Furthermore, on the boundary $\partial\Om_0$, $f_0$  satisfies
\begin{align*}
f_0(\x) &= 0 \quad\text{for all $\x$}\\
|Df_0(\x)| &= 1 \quad\text{for a.e. $\x$}.
\end{align*}
Then there exists a solution to the problem \eqref{P} up to some time $T$ where
\[
\lim_{t \to T} f(\x, t) = 0 \quad \forall \x \in \R^n.
\]
The free-boundary $\partial\Om_t$ is smooth for all $t \in (0,T)$.
\end{lemma}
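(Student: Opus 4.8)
The plan is to obtain the solution of the degenerate problem \eqref{P} as a limit of the regularized solutions $f^\ep$ constructed in Lemma~\ref{long-time}, using the uniform-in-$\ep$ estimates already established. First I would reduce to the smooth case: approximate the given $f_0$ from below by a sequence $f_0^{(k)}\in C^\infty(\closure{\Om_0^{(k)}})$, positive and strictly concave on a strictly convex smooth domain $\Om_0^{(k)}$, with $|Df_0^{(k)}|\leq 1$ on $\Om_0^{(k)}$, $|Df_0^{(k)}|=1$ on $\partial\Om_0^{(k)}$, and $f_0^{(k)}\nearrow f_0$, $\Om_0^{(k)}\nearrow\Om_0$. (One can, e.g., slightly shrink $\Om_0$, mollify, and rescale to restore the gradient condition on the boundary; the concavity of $f_0$ makes this routine.) For each fixed $k$, Lemma~\ref{long-time} gives a regularized solution $f^{\ep,k}$ on a maximal interval $[0,T^{\ep,k})$; I would first let $\ep\to 0$ with $k$ fixed, and afterwards let $k\to\infty$, or alternatively run a single diagonal limit.

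The key steps, in order. (1) \emph{Uniform bounds.} By Lemma~\ref{gradient}, $|Df^{\ep,k}|\leq 1$ everywhere, so the $f^{\ep,k}$ are uniformly Lipschitz in space; combined with $\Om_t^{\ep,k}\subset\Om_0^{(k)}\subset\Om_0$ (monotone shrinking in time, from concavity/Lemma~\ref{comparison}) and $0\le f^{\ep,k}\le\max f_0$, the family is equibounded. The equation then gives uniform H\"older bounds in $(\x,t)$ on compact subsets of the interior via De~Giorgi--Nash--Moser type estimates for the (uniformly parabolic, for fixed~$\ep$—but here I need the $\ep$-independent version from DiBenedetto's theory for the degenerate $p$-Laplacian, which applies to the regularized operators uniformly since $|Df|\le 1$) equation. (2) \emph{Uniform boundary regularity.} This is where Lemma~\ref{regularity} is essential: the non-degeneracy hypotheses \eqref{non-vanishing} hold uniformly in $\ep$ and $k$ once $k$ is large (because $f_0$ is positive and concave, so $f_0>m$ on some fixed interior ball and $\Om_0\supset$ that ball), hence $f^{\ep,k}$ is $C^\infty$ up to the free boundary with bounds depending only on $r,R,m,T_1$ — in particular independent of $\ep$. (3) \emph{Uniform lower bound on the existence time.} From the finite-time-extinction argument sketched in Lemma~\ref{long-time} (and Lemma~\ref{finite}), $T^{\ep,k}$ is bounded above; I also need it bounded \emph{below} away from $0$ uniformly, which follows because the extinction time is comparable to $\max f_0^{(k)}/|c_k|$ with $c_k$ controlled by the concavity modulus of $f_0^{(k)}$, uniform for large $k$. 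So all solutions exist on a common interval $[0,T_0)$ for a fixed $T_0>0$; iterating, one gets existence up to the extinction time $T$ of the limit.

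With these bounds, step (4) is a compactness/Arzel\`a--Ascoli argument: extract a subsequence $f^{\ep,k}\to f$ in $C^1_{\mathrm{loc}}$ on the closed parabolic region $\closure{\Om}\cap\{T_1\le t<T\}$ (using the uniform $C^k$ bounds near the boundary from Lemma~\ref{regularity} and interior parabolic estimates), with the free boundaries $\partial\Om_t^{\ep,k}$ converging in $C^1$ to a $C^1$ free boundary $\partial\Om_t$. The weak formulation of $f_t=\lap_p f$ passes to the limit because $|Df^{\ep,k}|^{p-2}Df^{\ep,k}$ versus $(|Df^{\ep,k}|^2+\ep)^{q-1}Df^{\ep,k}$ differ by $o(1)$ uniformly (the two factors agree as $\ep\to0$, and $|Df|\le 1$), and $f^{\ep,k}\to f$, $Df^{\ep,k}\to Df$ uniformly on the relevant sets. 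The over-determined conditions $f=0$ and $f_\nu=1$ on $\partial\Om_t$ pass to the limit by the $C^1$-up-to-boundary convergence; the initial condition $f(\cdot,0)=f_0$ follows from $f_0^{(k)}\nearrow f_0$ together with the comparison Lemma~\ref{comparison} (which sandwiches $f$ between rescalings and gives continuity at $t=0$). Smoothness of $\partial\Om_t$ for $t\in(0,T)$ is inherited from the uniform $C^k$ bounds of Lemma~\ref{regularity} (valid for every $k$ on $[T_1,T)$ for every $T_1>0$) and the smoothness of $g$ in those coordinates. Finally $\lim_{t\to T}f(\x,t)=0$ is inherited from the corresponding property of the $f^{\ep,k}$, using the uniform lower bound on the extinction times and monotonicity in $t$.

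I expect the main obstacle to be step (4), specifically passing the free-boundary condition $f_\nu=1$ to the limit and ruling out that the limiting support $\Om_t$ degenerates or that the free boundary develops a cusp: one must use the uniform non-degeneracy $f_1\ge m/2R$ from the proof of Lemma~\ref{regularity} to get a uniform \emph{positive} lower bound on $f_\nu$ near $\partial\Om_t$ and hence a genuine $C^1$ (in fact $C^\infty$) limit free boundary with $f_\nu=1$ on it, rather than merely $f_\nu\le 1$. A secondary technical point is the construction of the smooth strictly concave approximations $f_0^{(k)}$ with the exact Neumann data $|Df_0^{(k)}|=1$ on $\partial\Om_0^{(k)}$ and the uniform lower bound on existence times; this is where the hypotheses that $f_0$ is concave with convex bounded support are used in an essential way.
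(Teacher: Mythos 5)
Your proposal is correct in outline and follows the same overall strategy as the paper: approximate $f_0$ from below by smooth, strictly concave data on strictly convex smooth domains, solve the regularized problems via Lemma \ref{long-time}, and pass to the limit using the uniform gradient bound and the $\ep$-independent regularity up to the free boundary from Lemma \ref{regularity}. The one structural difference is the convergence mechanism. The paper ties the regularization parameter to the approximation of the initial data (a single index $\ep$) and arranges, via Lemma \ref{comparison}, that the family $f^\ep$ is \emph{monotone increasing} as $\ep \downarrow 0$; consequently $f = \lim f^\ep$, $\Om = \cup\, \Om^\ep$ and $T = \lim T^\ep$ are defined with no subsequence extraction, the initial condition follows by squeezing $f^\ep_0 \leq \lim_{t\to 0} f \leq f_0$, and the positive lower bound on the existence time is automatic since $T^\ep$ is nondecreasing. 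You instead run a two-parameter compactness (Arzel\`a--Ascoli) argument, which works but obliges you to establish the uniform lower bound on $T^{\ep,k}$ separately; note that your justification of that bound is backwards --- the extinction-time estimate $T \leq \max f_0^{(k)}/\abs{c_k}$ of Lemma \ref{finite} is an \emph{upper} bound, and the lower bound you need comes simply from comparison with one fixed member of the family. On passing to the limit in the weak formulation, your insistence on locally uniform convergence of the gradients (from the uniform boundary $C^k$ bounds plus $\ep$-independent interior $C^{1,\alpha}$ estimates of DiBenedetto type) is in fact more careful than the paper, which invokes only the weak convergence $Df^\ep \weaklyto Df$ --- by itself insufficient for the nonlinear flux term. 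Finally, for $\lim_{t\to T} f = 0$ the paper uses an interior $C^{0,1/2}$-in-time estimate (Lieberman), which is the precise substitute for the ``monotonicity in $t$'' you appeal to; both routes are fine.
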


\begin{proof}
Choose a sequence of functions $f^\ep_0$ with positive sets $\Om^\ep_0$ for all $\ep \in (0,1)$ such that
\begin{align*}
&\Om^\ep_0 \in C^\infty \text{ and } f^\ep_0 \in C^\infty(\closure{\Om^\ep_0}),\\
&\Om^\ep_0 \text{ is strictly convex},\\
&f^\ep_0 \text{ is strictly concave},\\
&\Om^{\ep_1}_0 \subset \Om^{\ep_2}_0 \text{ and } f^{\ep_1}_0 \leq f^{\ep_2}_0 \text{ if $\ep_1 > \ep_2$},\\
&\Om_0 = \cup \Om^\ep_0 \text{ and } f_0(\x) = \lim_{\ep \to 0} f^\ep_0(\x) \text{ for all } x\in\R^n,\\
&|Df^\ep| = 1 \text{ on } \partial\Om^\ep.
\end{align*}
In other words, $f^\ep_0$ is an increasing sequence of smooth and strictly concave function that converge to $f_0$ as $\ep \to 0$. From the Lemma \ref{long-time}, for each $\ep$, there exists a unique solution $f_\ep$ to the problem \eqref{Pe} up to some time $T^\ep$ where it vanishes identically. We will prove that $\lim_{\ep \to 0} f^\ep$ is a solution to the original problem \eqref{P}.

From the lemma \ref{comparison} and our choice of $f^\ep_0$, it is clear that if $\ep_1 > \ep_2$, then
\begin{align*}
&T^{\ep_1} \leq T^{\ep_2}\\
&\Om^{\ep_1} \subset \Om^{\ep_2}\\
&f^{\ep_1} \leq f^{\ep_2} \text{ in $\Om^{\ep_1}$}.
\end{align*}
Define
\begin{align*}
T &= \lim_{\ep \to 0} T^\ep\\
\Om &= \cup \Om^\ep\\
f(\x,t) &= \lim_{\ep \to 0} f^\ep(\x,t) \quad\text{for $(\x,t) \in \R^n \times [0,T)$}.
\end{align*}
Due to the uniform smoothness of $f^\ep$ in a neighborhood of $\partial\Om^\ep$, we have
\begin{gather*}
\Om_{(0,T)} \in C^\infty\\
f = 0 \quad\text{and}\quad |Df| = 1 \quad\text{on $\partial\Om \times \{ 0 < t < T \}$}.
\end{gather*}

If $(\x,t) \in \Om$, there exists an $\ep_0$ such that $(\x,t) \in \Om^\ep$ for all $\ep < \ep_0$. Since $f^\ep(\x,t)$ increases as $\ep$ decreases to 0,
\[
f(\x,t) = \lim_{\ep \to 0} f^\ep(\x,t) > 0.
\]
From the bound $|Df| \leq 1$ and the Corollary 2.15 in Chapter II of \cite{Lieberman1996}, we have interior $C_t^{0,1/2}$ estimate for $f^\ep$ as functions of $t$, uniformly in $t$ and $\ep$. Together with the fact that for every $\x$,
\[
\lim_{t \to T^\ep} f^\ep(\x,t) = 0
\]
we have
\[
\lim_{t \to T} f(\x,t) = 0.
\]

Because for every $\ep$ and $t$
\[
f^\ep(\x,t) \leq f^\ep(\x,0) < f_0(\x,0),
\]
we have
\[
f(\x,t) = \lim_{\ep \to 0} f^\ep(\x,t) \leq f_0(\x).
\]
On the other hand, since $f^\ep(\x,t)$ increases as $\ep$ decreases to 0,
\[
\lim_{t \to 0} f(\x,t) \geq \lim_{t \to 0} f^\ep(\x,t) = f^\ep_0(\x)
\]
for any $\ep$. Consequently,
\[
\lim_{t \to 0} f(\x,t) \geq \lim_{\ep \to 0} f_0^\ep(\x) = f_0(\x).
\]
Hence
\[
\lim_{t \to 0} f(\x,t) = f_0(\x).
\]

From $|Df^\ep| \leq 1$, we can choose a sequence of $\ep$ converging to $0$ such that
\[
Df^\ep \weaklyto Df
\]
in all compact subsets of $\Om$. Given any function $\te \in C^{\infty}_0(\Om)$ and $0 < t_1 < t_2 < T$ we have from the equation
\[
f^\ep_t = \divg ( (|Df^\ep|^2 + \ep)^{q-1}\,Df^\ep )
\]
that
\[
\int_{\Om_{(t_1, t_2)}} f^\ep \te_t\,d\x\,dt - \left. \int f^\ep\te\,d\x \right|^{\Om_{t_2}}_{\Om_{t_1}} =
\int_{\Om_{(t_1, t_2)}} (|Df^\ep|^2 + \ep)^{q-1}\,Df^\ep\cdot D\te\,d\x\,dt.
\]
Passing to the limit we then obtain
\[
\int_{\Om_{(t_1, t_2)}} f \te_t\,d\x\,dt - \left. \int f\te\,d\x \right|^{\Om_{t_2}}_{\Om_{t_1}} =
\int_{\Om_{(t_1, t_2)}} |Df|^{2(q-1)}\,Df\cdot D\te\,d\x\,dt.
\]
\end{proof}

%%%%%%%%%%%%%%%%%%%%%%%%%%%%% Uniqueness
\section{Uniqueness}\label{s-uniqueness}
%%%%%%%%%%%%%%%%%%%%%%%%%%%%%%%%%%%%%%%%%
\begin{lemma}
Solution obtained in the Lemma \ref{existence} is the unique solution to the problem (P).
\end{lemma}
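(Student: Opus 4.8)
The plan is to reduce the uniqueness statement for the degenerate problem (P) to the comparison principle already established for the regularized problems. The key observation is that the construction in Lemma~\ref{existence} produces the solution $f$ as a monotone limit of the regularized solutions $f^\ep$, and so what really needs to be shown is that \emph{any} solution $\tilde f$ of (P) in the sense of section~\ref{s-statement} is squeezed between the regularized approximants from below and from above, forcing $\tilde f = f$ in the limit $\ep \to 0$.

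First I would set up a one-sided comparison: let $\tilde f$ be any solution of (P) with the given initial data $f_0$, and let $f^\ep$ be the regularized solution with initial data $f^\ep_0 \leq f_0$ constructed in Lemma~\ref{existence}. Following the scaling trick used in the proof of Lemma~\ref{comparison}, introduce the dilated functions $\tilde f^\la(\x,t) = \la^{-1}\tilde f(\la\x,\la^2 t)$ for $\la > 1$; by concavity of $f_0$ one has $\closure{\Om^\la_0} \subset \Om_0$ and $\tilde f^\la_0 < f_0$, hence a fortiori $\tilde f^\la_0$ is strictly dominated by $f^{\ep}_0$ for all sufficiently small $\ep$ once we note $f^\ep_0 \to f_0$. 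Then I would run the comparison argument of Lemma~\ref{uniqueness-regularized} with the roles of $f$ and $f'$ played by $\tilde f^\la$ and $f^\ep$: introduce the auxiliary quantity $f^\ep - \tilde f^\la - m + \de t$, show the free-boundaries cannot touch first (using $|D\tilde f^\la| = 1$ on $\partial\tilde\Om^\la$ and $|Df^\ep| < 1$ from Lemma~\ref{gradient}), and show an interior minimum cannot occur first by plugging into the two equations and using $\ep > 0 = \ep_{\text{degenerate}}$ exactly as in that lemma. This gives $\tilde f^\la \leq f^\ep$; letting $\la \to 1$ and then $\ep \to 0$ yields $\tilde f \leq f$. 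For the reverse inequality I would instead dilate $f^\ep$ by $\la > 1$ (so that $(f^\ep)^\la$ has shrunken support strictly inside $\Om_0 = \cup\Om^\ep_0$ and initial data below $f_0 \leq \tilde f_0$ up to the usual strict-inequality adjustment) and compare against $\tilde f$, obtaining $(f^\ep)^\la \leq \tilde f$, hence $f \leq \tilde f$ after passing $\la \to 1$, $\ep \to 0$.

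The main obstacle is that $\tilde f$ is only a weak solution — it is merely $C(0,T;C^1(\closure{\Om_t}))$ with a $C^1$ free-boundary, so it need not be twice differentiable, and the pointwise maximum-principle comparison arguments of Lemma~\ref{uniqueness-regularized} (which freely use $\lap \tilde f$, $\tilde f_{\nu\nu}$, etc.) cannot be applied to $\tilde f$ directly. I expect this to be handled by keeping the smooth competitor on the side where second derivatives are used: in both comparisons above the object being differentiated twice at the touching point should be arranged to be the \emph{regularized} solution (smooth up to its free-boundary by Lemmas~\ref{short-time} and~\ref{regularity}), while only first-order information — the value, the gradient, and the normal derivative on the free-boundary — is extracted from $\tilde f$, which is exactly what the definition of solution in section~\ref{s-statement} provides. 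One then needs the elementary fact that $\tilde f$, being a weak solution with $|D\tilde f|\le 1$, is non-increasing in $t$ and Lipschitz in time, so the limits $\tilde f(\cdot,0)=f_0$ and the free-boundary regularity are compatible with taking $t_0$ to be a first touching time. Once the comparison is set up with the smooth function carrying the second derivatives, the rest is a line-by-line repetition of the earlier proofs, and the two one-sided bounds combine to give $\tilde f = f$ on $\R^n \times [0,T)$, with $T$ necessarily the same by the vanishing condition.
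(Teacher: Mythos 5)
Your overall strategy differs from the paper's, and it contains a genuine gap at the interior-comparison step. The paper does \emph{not} compare the competitor solution $g$ against the regularized approximants $f^\ep$. Instead it compares $g$ directly against the constructed solution $f$, both weak solutions of the \emph{same} degenerate equation: it rescales $g$ by the $p$-Laplacian-invariant scaling $g^\la(\x,t)=\la^{-1}g(\la^2\x,\la^{p+2}t)$, which changes the free-boundary gradient from $1$ to $\la$; the mismatch $|Df|=1>\la=|Dg^\la|$ at a hypothetical first touching point of the free boundaries rules out contact of the supports (this uses only the $C^1$ information guaranteed by the definition of solution); and once $\closure{\Om_t}\subset\Om^\la_t$ is secured up to time $t_0$, the ordering $f<g^\la$ \emph{inside} $\Om_{[0,t_0]}$ is obtained from the weak comparison principle for the degenerate equation (Lemma 3.1, Chapter VI of \cite{DiBenedetto1993}) applied on the cylinder $\Om_{[0,t_0]}$ with ordered parabolic boundary data --- no second derivatives of either solution are ever used.

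The gap in your plan is precisely where you try to rerun the pointwise argument of Lemma \ref{uniqueness-regularized} with $\tilde f^\la$ against $f^\ep$. At an interior first-touching point $(\x_0,t_0)$ that argument is \emph{not} a one-sided second-derivative argument: it requires $D\tilde f^\la=Df^\ep$, $\lap f^\ep\ge\lap\tilde f^\la$ and $f^\ep_{\nu\nu}\ge\tilde f^\la_{\nu\nu}$ at the minimum of $f^\ep-\tilde f^\la$, i.e.\ genuine second-order information on \emph{both} functions, and moreover the sign conditions $\lap\le 0$, $f_{\nu\nu}\le 0$ coming from concavity in order to compare the two \emph{different} operators $\divg((|D\cdot|^2+\ep)^{q-1}D\cdot)$ and $\divg(|D\cdot|^{p-2}D\cdot)$, which are not ordered as operators. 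The competitor $\tilde f$ is only assumed to lie in $C(0,T;C^1(\closure{\Om_t}))$ and is not assumed concave, so it has neither classical second derivatives nor the sign information your step needs; ``keeping the smooth function on the side where second derivatives are used'' does not rescue the interior case, because the inequality $D^2 f^\ep\ge D^2\tilde f^\la$ at the touching point is exactly the input you cannot produce. The same objection applies to your reverse comparison $(f^\ep)^\la\le\tilde f$. To close the argument you must replace the interior pointwise step by an integral (weak) comparison principle between two weak solutions of the same degenerate equation, which is what the paper's reduction to $f$ versus $g^\la$ and the citation of \cite{DiBenedetto1993} accomplish; your boundary-touching step and the limits $\la\to1$, $\ep\to0$ are otherwise in the right spirit.
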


\begin{proof}
Assume that there exists another solution $g$ to the problem (P). Let $\Om^*$ be the positive set of $g$ and $T^*$ its existence time. Also without loss of generality, assuming that $f_0$ attains it maximum value at $0$. For each positive $\la$, it is clear that
\[
g^{\la}(\x,t) = \la^{-1} g(\la^2 \x, \la^{p+2} t)
\]
is a solution to the problem \ref{Pe} with positive set
\[
\Om^\la = \{ (\x,t) \st (\la^2 \x, \la^{p+2} t) \in \Om^* \}
\]
and initial data
\[
g^\la_0(\x,t) = \la^{-1} f_0 ( \la^2 \x, \la^{p+2} t ).
\]

Clearly, for $\la < 1$,
\begin{gather*}
\closure{\Om_0} \subset \Om^\la_0\\
f_0 < g^\la_0 \quad\text{in $\Om_0$}.
\end{gather*}
We will show that for all $ t < \min(T, \la^{-(p+2)} T^*)$,
\begin{gather*}
\closure{\Om_t} \subset \Om^\la_t\\
f(\x,t) < g^\la(\x,t) \quad\text{in $\Om_t$}.
\end{gather*}
Assuming it is not the case, then there must be a first time $t_0$ where at least one of those two inequalities is violated. If the first one is violated at the time $t_0$, it means $\partial\Om_t$ and $\partial\Om^\la_t$ touch at some point $\x_0$. At that point $(\x_0,t_0)$,
\[
|Df| = 1 > \la = |Dg^\la|.
\]
There must be then a point $\x \in \Om_t$ such that
\[
f(\x,t_0) > g^\ep(\x,t_0).
\]
which implies that the second inequality must be violated at some time before $t_0$. So, up to time $t_0$,
\[
\closure{\Om_t} \subset \Om^\la_t.
\]
Consequently, on the parabolic boundary of $\Om_{[0,t_0]}$, $f < g^\la$. Thus, from the lemma 3.1 in Chapter VI of \cite{DiBenedetto1993}, we have $f < g$ in $\Om_{[0,t_0]}$ which contradicts our choice of $t_0$. Hence for all $t < \min(T,\la^{-(p+2)} T^*)$,
\begin{gather*}
\closure{\Om_t} \subset \Om_t^\la\\
f(\x,t) < g^\la(\x,t) = \la^{-1} g(\la^2 \x, \la^{p+2} t) \quad\text{in $\Om_t$}.
\end{gather*}
Let $\la \to 1$ we obtain for all $t < \min(T,T^*)$,
\begin{gather*}
\Om_t \subset \Om^*_t\\
f(\x,t) \leq g(\x,t) \quad\text{in $\Om_t$}.
\end{gather*}

Arguing similarly for $\la > 1$ we on the other hand obtain
\begin{gather*}
\Om^*_t \subset \Om_t\\
g(\x,t) \leq f(\x,t) \quad\text{in $\Om^*_t$}.
\end{gather*}

Thus for all $t < \min(T, T^*)$
\begin{gather*}
\Om_t = \Om^*_t\\
f(\x,t) = g(\x,t).
\end{gather*}

\end{proof}

%%%%%%%%%%%%%%%%%%%%%%%%%%%%% Finiteness of T
\section{Vanishing in finite time}\label{s-finite}
%%%%%%%%%%%%%%%%%%%%%%%%%%%%%%%%%%%%%%%%%%%%%%%%%%%%%%
\begin{lemma}\label{finite}
The existence time $T$ of the solution obtained in the Lemma \ref{existence} is finite.
\end{lemma}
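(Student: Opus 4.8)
The plan is to establish a strict negative upper bound on the initial ``speed'' $f_t = \lap_p f_0$ that persists in time, forcing $\max_{\x} f(\x,t)$ to decay at a linear rate and hence vanish by a finite time $T$. Since $\Om_0$ is bounded and convex and $f_0$ is concave with $|Df_0|=1$ on $\partial\Om_0$, the Hessian $D^2 f_0$ is negative semidefinite; I first want to upgrade this to a quantitative statement. Concretely, I would argue that there is a constant $c < 0$ with
\[
\lap_p f_0 = |Df_0|^{p-2}\bigl(\lap f_0 + (p-2) (f_0)_{\nu\nu}\bigr) < c \quad\text{in }\Om_0.
\]
Near the free-boundary this follows from Lemma~\ref{f-nu-nu} (applied at $t=0$, or rather its elliptic analogue, using that $|Df_0|=1$ there and $(f_0)_{\nu\nu}<0$ strictly by Hopf, so $\lap_p f_0$ is bounded above by a negative constant in a collar neighborhood of $\partial\Om_0$); in the interior, away from the boundary, $|Df_0|$ is bounded below (again by concavity plus the boundary normalization, as in the proof of Lemma~\ref{regularity}), so $\lap_p f_0$ is a genuinely uniformly elliptic expression applied to a concave function with no flat directions preserved, giving $\lap_p f_0 < c$. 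If $f_0$ is only Lipschitz/weakly concave one passes first to the smooth strictly concave approximants $f_0^\ep$ of Lemma~\ref{existence}, for which the bound $\lap_p^\ep f_0^\ep < c$ holds uniformly, and this is in fact exactly the quantity referenced in the proof of Lemma~\ref{long-time}.

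Next I would propagate this bound forward in time. The key observation is that $w = f_t$ satisfies a linear parabolic equation obtained by differentiating $f_t = a^{ij}(Df)f_{ij}$ in $t$, namely
\[
w_t = a^{ij} w_{ij} + (f_{ij}\, Da^{ij})\cdot Dw,
\]
with no zeroth-order term, and with $w = 0$ on the free-boundary (since $f\equiv 0$ there for all time). By the maximum principle applied on $\Om_{(0,t)}$, the maximum of $w$ over the closure of each time slice is attained on the parabolic boundary; on the lateral free-boundary $w=0$, while on $\{t=0\}$ we have $w = \lap_p f_0 < c < 0$. Hence $f_t(\x,t) < c$ wherever $f(\x,t)>0$, at least as long as the comparison is justified (one may need to note $\max w$ cannot be attained at a lateral boundary point with value $0$ unless $w\equiv 0$, which is impossible). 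Integrating in time, $f(\x,t) \le f_0(\x) + c\,t$ for every $\x$, so $\max_{\x} f(\x,t) \le \max_{\x} f_0(\x) + c\,t$, which becomes nonpositive at
\[
T \le \frac{\max_{\x} f_0(\x)}{|c|} < \infty,
\]
forcing $f(\cdot,t)$ — and with it $\Om_t$ — to disappear by that time. Since by Lemma~\ref{existence} the solution exists up to a time $T$ where $f(\x,t)\to 0$ for all $\x$, the above shows that time is finite.

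The main obstacle I anticipate is making the initial strict bound $\lap_p f_0 < c$ rigorous under the stated weak hypotheses: $f_0$ is only assumed concave and Lipschitz with $|Df_0|=1$ only a.e. on $\partial\Om_0$, so $\lap_p f_0$ is a priori merely a (negative) distribution, and there is no classical $(f_0)_{\nu\nu}$ at corners of $\partial\Om_0$ if any were allowed — though convexity of $\Om_0$ plus the prescribed slope should rule out genuine degeneracy of the speed. The clean way around this is to work entirely with the regularized approximants: for the smooth, strictly concave, strictly convex data $f_0^\ep$ one gets a uniform $c<0$ with $\lap_p^\ep f_0^\ep < c$ on $\Om_0^\ep$ by compactness (this is precisely the inequality invoked in Lemma~\ref{long-time}), then the parabolic maximum principle gives $T^\ep \le \max f_0^\ep / |c| \le \max f_0/|c|$ uniformly, and finally $T = \lim_{\ep\to 0} T^\ep \le \max f_0 / |c| < \infty$. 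The secondary technical point is justifying the maximum principle for $w=f_t$ on the moving domain $\Om$ up to the free-boundary, which is legitimate here because of the uniform-in-$\ep$ smoothness of $f^\ep$ near $\partial\Om^\ep$ established in Lemma~\ref{regularity}; alternatively one avoids differentiating in $t$ altogether by using $f_0^\ep + c\,t$ directly as a supersolution barrier and invoking the comparison principle (Lemma~\ref{comparison}), which is the most economical route.
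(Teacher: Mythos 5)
Your overall strategy --- obtain $\lap_p f_0 < c < 0$, propagate $f_t \leq c$ forward in time, and integrate --- is the same as the paper's, but two of your key steps fail as stated. First, the bound $\lap_p f_0 < c$ on all of $\Om_0$ (and its uniform-in-$\ep$ regularized version $\lap_p^\ep f_0^\ep < c$) is false for general admissible data: a concave $f_0$ vanishing on $\partial\Om_0$ necessarily has an interior maximum where $Df_0 = 0$, and since $p>2$ the operator degenerates there. Concretely, for $f_0(\x) = (1-|\x|^2)/2$ on the unit ball one computes $\lap_p f_0 = -(n+p-2)\,|\x|^{p-2} \to 0$ at the origin, and likewise $\lap_p^\ep f_0^\ep \to 0$ there as $\ep \to 0$; your claim that ``$|Df_0|$ is bounded below in the interior'' is exactly what fails. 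This is why the paper's proof begins by invoking the comparison principle and scaling to reduce to \emph{one particular} (cone-like) initial datum, for which approximants $f_0^\ep$ with a genuinely $\ep$-uniform negative bound can be arranged; without that reduction the constant $c$ degenerates with $\ep$ and the argument yields nothing in the limit.

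Second, the propagation step is broken because $f_t$ does \emph{not} vanish on the free boundary: at a free-boundary point the boundary recedes with some normal speed $V>0$ and $f_\nu = 1$, so $f_t = -V f_\nu = -V \neq 0$ (in one dimension, $f(x,t) = (a(t)-x)_+$ has $f_t = a'(t)$ up to the boundary). Moreover, even granting $f_t = 0$ on the lateral boundary, your maximum-principle comparison would only give $f_t \leq \max(0,c) = 0$ in the interior, not $f_t \leq c$. The paper's actual argument rules out a free-boundary maximum of $f_t^\ep$ differently: Hopf's lemma would force $(f_t^\ep)_\nu = (f_\nu^\ep)_t < 0$ at such a point, whereas the shrinking of $\Om_t$ gives $f_\nu^\ep(\x_0,t) < 1$ for $t<t_0$ and $f_\nu^\ep(\x_0,t_0)=1$, hence $(f_\nu^\ep)_t \geq 0$ --- a contradiction. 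Your fallback barrier $f_0^\ep + ct$ does not repair this via Lemma \ref{comparison}, since that lemma compares two genuine solutions of the free-boundary problem rather than a solution against a supersolution, and controlling the barrier on the moving boundary $\partial\Om_t^\ep$ (where one would need $f_0^\ep(\x) \geq |c|\,t$) is essentially the assertion to be proved.
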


\begin{proof}
Clearly from the Comparison Principle in section \ref{s-comparison} and scaling that it is enough to prove this lemma for one particular initial function $f_0$. We show that if a smooth function $f_0$ satisfies all hypotheses of the Lemma \ref{existence} and
\[
\lap_p f_0 < c
\]
for some $c < 0$, then for any $0 < t_1 < t_2 < T$ and any $\x \in \Om_{t_1}$, the corresponding solution $f$ satisfies the inequality
\begin{equation}\label{f_t-bound}
f(\x,t_2) - f(\x,t_1) \leq c ( t_2 - t_1 ).
\end{equation}
It then readily follows that
\[
T \leq \frac{\max f_0}{\abs{c}}.
\]

Choose the sequence $\{ f_0^\ep \}$ so that
\[
\divg ( (|Df_0^\ep|^2 + \ep)^{q-1}\,Df_0^\ep ) < c.
\]
We will show that $f^\ep$ satisfies
\[
f_t^\ep \leq c
\]
for all $\ep$ and \eqref{f_t-bound} then follows immediately.

Differentiating the equation satisfied by $f^\ep$ with respect to $t$, it is easy to see that $f^\ep_t$ satisfies the Maximum Principle. Since $f^\ep_t \leq c$ at the time $t = 0$ from our choice of $f_0^\ep$, all we need to show is that $f^\ep_t$ cannot attain its maximum value on the free-boundary. From the Hopf's Lemma, if $f_t^\ep$ attains its maximum value at a point $\x_0$ on the free-boundary $\partial\Om_{t_0}$, then we must have
\[
(f^\ep_\nu)_t (\x_0, t_0) = (f^\ep_t)_\nu (\x_0, t_0) < 0
\]
where $\nu$ is the inward unit normal at $\x_0$ with respect to $\partial\Om_{t_0}$. On the other hand, since $\Om_t$ shrinks in time, for any $t < t_0$ $\x_0 \in \Om_t$ and so,
\[
f^\ep_\nu (\x_0, t) < 1
\]
while
\[
f^\ep_\nu (\x_0,t_0) = 1
\]
which lead to
\[
(f^\ep_\nu)_t (\x_0, t_0) \geq 0.
\]
\end{proof}

\begin{remark}
The finiteness for the existence time holds for any initial data with bounded support, not just concave ones.
\end{remark}

\bibliographystyle{amsplain}
\bibliography{PDE}

\end{document}